\newcommand{\NN}{\mathbb{N}}
\newcommand{\CC}{\mathbb{C}}
\newcommand{\RR}{\mathbb{R}}
\newcommand{\EE}{\mathbb{E}}
\newcommand{\Oo}{\mathcal{O}}
\newcommand{\ord}{\textrm{ord}_t}
\newcommand{\bd}[1]{\boldsymbol{#1}}
\renewcommand{\epsilon}{\varepsilon}
\renewcommand{\phi}{\varphi}
\newtheorem{thm}{Theorem}
\newtheorem{lemma}{Lemma}
\newtheorem{prop}{Proposition}
\theoremstyle{definition}
\newtheorem{remark}{Remark}
\newtheorem{defi}{Definition}
\newtheorem*{defi*}{Definition}
\newtheorem{example}{Example}
\begin{document}

\title{Gevrey estimates of formal solutions for certain moment partial differential equations with variable coefficients}

\author{Maria Suwi\'nska}

\address{
	        	Faculty of Mathematics and Natural Sciences,
				College of Science\\
				Cardinal Stefan Wyszy\'nski University\\
				W\'oycickiego 1/3,
				01-938 Warszawa, Poland, ORCID: 0000-0002-4493-6519
}
\email{m.suwinska@op.pl}         

\begin{abstract}
The goal of this paper is to investigate Gevrey properties of formal solutions of certain generalized linear partial differential equations with variable coefficients. In particular, we extend the notion of moment partial differential equations to include differential operators that are not connected with kernel functions. Using the modified version of Nagumo norms and the properties of the Newton polygon we compute the Gevrey estimate for formal solutions of such generalized partial differential equations.  
\end{abstract}
\keywords{formal solutions, generalized partial differential equations, Nagumo norms, Newton polygon, Gevrey order}
\subjclass[2010]{35C10, 35G10}

\maketitle
\sloppy

\section{Introduction}

Over the last decade many studies have been devoted to the topic of
the Gevrey properties of formal series solutions of linear partial
differential equations. Main topics of interest include inhomogeneous
PDEs with constant coefficients \cite{BY}, the heat equation and
its various generalizations \cite{BLR,Remy1,Remy2} and PDEs with
time-dependent coefficients \cite{TY}. Those results have also been
generalized to the case of moment partial differential equations with
constant and time-dependent coefficients \cite{Mic1,Mic2,MicS}.

In this paper we aim to further generalize the notion of moment partial
differential equations. To this end, we define Gevrey-type sequences
and operators, which maintain many of the characteristics of the moment
functions and moment differential operators, respectively. More details on the subject are given in Section
\ref{Sec2}. 

We focus our attention on a general Cauchy problem of the form:
\begin{equation}
\left\{ \begin{aligned}P\left(\partial_{m_{0},t},\partial_{m_{1},z_{1}},\dots,\partial_{m_{N},z_{N}}\right)u(t,\bd z) & =f(t,\bd z)\\
\partial_{m_{0},t}^{j}u(0,\bd z) & =\phi_{j}(\bd z)\textrm{ for }0\leq j<M
\end{aligned}
\right.,\label{eq:main_0}
\end{equation}
where $P\left(\partial_{m_{0},t},\partial_{m_{1},z_{1}},\dots,\partial_{m_{N},z_{N}}\right)$
is a linear operator with coefficients depending on both variables
$t\in\CC$ and $\bd z\in\CC^{N}$, and $\partial_{m_{0},t},\partial_{m_{1},z_{1}},\dots,\partial_{m_{N},z_{N}}$
are Gevrey-type differential operators (see Definition \ref{def:gevrey_op}).
Our goal is to connect the Gevrey order of its solution with the orders
of the inhomogeneity $f$ and the variable coefficients.

While analyzing an equation very similar to the ones studied in \cite{TY,MicS},
we mostly draw inspiration from \cite{BLR,Remy1}, where the concept of Nagumo norms is utilized. Below we recall the definition used in papers mentioned above:

\begin{defi*}
Let us consider a function $f$ holomorphic on a disc $D_{R}=\left\{ z\in\CC:\ |z|<R\right\} ,$
$p\geq0$ and $0<r<R$. Then \emph{the Nagumo norm $\|f\|_{p,r}$
of $f$} is defined by
\begin{equation*}
\|f\|_{p,r}=\sup_{z\in D_{r}}|f(z)(r-|z|)^{p}|.
\end{equation*}
\end{defi*}
The usefulness of this family of norms lays in their
properties. Namely, for any two functions $f_{1},f_{2}$ holomorphic
on $D_{R}$, non-negative numbers $p,q$ and $0<r<R$ we have:
\begin{itemize}
\item $\|f_{1}f_{2}\|_{p+q,r}\leq\|f_{1}\|_{p,r}\|f_{2}\|_{q,r}$, 
\item $\|\partial_{z}f_{1}\|_{p+1,r}\leq C(p+1)\|f_{1}\|_{p,r}$ for a certain
constant $C>0$,
\item $|f_{1}(z)|\leq\|f_{1}\|_{p,r}(r-|z|)^{-p}$ for all $z\in D_{r}$.
\end{itemize}
Especially the second attribute is very important, because it enables
us to estimate $\partial_z f$ by $f$ without making shrinking of
the domain necessary. Unfortunately, the standard definition mentioned above
could not be used to determine the Gevrey order of the solution of
(\ref{eq:main_0}). This fact made creating a new tool with similar
properties necessary and so certain modifications were utilized to
generalize the Nagumo norms in a way more fitted to our needs. Indeed,
it has been proven in Lemmas \ref{lem:nagumo_prop}, \ref{lem:nagumo_der}
and \ref{lem:sup_norm} that the properties listed above hold for
the modified Nagumo norms defined in Section \ref{Sec2}. Moreover, in some cases
both norms coincide for $z\in\CC$.

The main result of the paper, Theorem \ref{thm:gevrey}, shows that
under certain additional assumptions the Gevrey order of the formal
solution of (\ref{eq:main_0}) is equal to the reverse of the slope
of the first non-horizontal segment in the Newton polygon for $P$.
This outcome is a direct generalization of results found in \cite{BLR,MicS,Remy1,TY}.

The paper is structured as follows. In Section \ref{Sec2} we first
establish the notation and then move on to defining Gevrey-type sequences
and operators and presenting their characteristics. Some of the most
important examples are listed there as well. We also recall the definition
of the Newton polygon and alter it to better accommodate the structure
of equation (\ref{eq:main_0}). Section \ref{Sec3} is entirely devoted
to the modified Nagumo norms. Several important properties of these
norms are presented. We also go into further detail on their connection
with the Gevrey order of a function. Section
\ref{Sec4} contains all conditions, under which we would like to consider our initial equation. After that we present
the proof of Proposition \ref{prop:norm_est}, from which the main
result of this paper follows. Ideas for further study are included
in Section \ref{Sec5}.

%preliminaries
\section{Preliminaries}\label{Sec2}

\subsection{Notation}

Let us introduce the notation that will be used throughout this paper.
First, note that by $\NN_{0}$ we shall denote the set of all non-negative
integers and $\NN=\NN_{0}\setminus\left\{ 0\right\} .$

For any positive integer $N$, by $D_{R}^{N}$ we denote a polydisc
with a center at $0$ and a radius $R>0$, i.e., a set 
\begin{equation*}
D_{R}^{N}=\left\{ \bd z=(z_{1},\dots,z_{N})\in\CC^{N}:\ |z_{j}|<R\textrm{ for }j=1,\dots,N\right\} .
\end{equation*}

If $\Omega\subset\CC^{N}$ and a function $f$ is holomorphic on the
set $\Omega$ then we write $f\in\Oo(\Omega).$

For any function $f\in\Oo(D_{R})$ by $\ord(f)$ we denote the order
of zero of the function $f$ at $t=0$.

A space of all formal power series $\hat{u}(t)=\sum_{n=0}^{\infty}u_{n}t^{n}$
with coefficients from a given Banach space $\mathbb{E}$ will be
denoted by $\mathbb{E}[[t]]$. Throughout this paper we will restrict
ourselves to the case when $\mathbb{E}$ is the space $\Oo(D_{R}^{N})\cap C(\overline{D}_{R}^{N})$
of all functions holomorphic on a polydisc $D_{R}^{N}\subset\CC^{N}$ and continuous on its closure, equipped with the standard norm $\|f\|=\sup_{\bd\zeta\in D_{R}^{N}}|f(\bd\zeta)|.$

For all multi-indices $\bd\alpha=(\alpha_{1},\alpha_{2},\dots,\alpha_{N})$,
$\bd\beta=(\beta_{1},\beta_{2},\dots,\beta_{N})$ in $\NN_{0}^{N}$,
points $\bd z=(z_{1},z_{2},\dots,z_{N})\in\CC^{N}$ and $\bd s=(s_{1},s_{2},\dots,s_{N})\in\RR^{N},$
and $A\in\RR$ the following notation will be used: 
\begin{align*}
\bd\alpha+\bd\beta=(\alpha_{1}+\beta_{1},\alpha_{2}+\beta_{2},\dots,\alpha_{N}+\beta_{N})\qquad & \bd\alpha\cdot\bd\beta=\sum_{j=1}^{N}\alpha_{j}\beta_{j}\\
\bd\alpha\leq\bd\beta\iff\alpha_{j}\leq\beta_{j}\textrm{ for }j=1,\dots,N\qquad & |\bd\alpha|=\sum_{j=1}^{N}\alpha_{j}\\
\bd z^{\bd s}=z_{1}^{s_{1}}z_{2}^{s_{2}}\dots z_{N}^{s_{N}}\qquad & A\bd\alpha=(A\alpha_{1},A\alpha_{2},\dots,A\alpha_{N})\\
\bd0=(0,0,\dots,0).\qquad & 
\end{align*}

Let $a(\bd z)=\sum_{\bd\gamma\in\NN_{0}^{N}}a_{\bd\gamma}\bd z^{\bd\gamma}$
and $b(\bd z)=\sum_{\bd\gamma\in\NN_{0}^{N}}b_{\bd\gamma}\bd z^{\bd\gamma}$
be two formal power series. Then we call $b(\bd z)$ \emph{a majorant}
of $a(\bd z)$ and write $a(\bd z)\ll b(\bd z)$ if $|a_{\bd\gamma}|\leq b_{\bd\gamma}$
for every $\bd\gamma\in\NN_{0}^{N}.$

\subsection{Gevrey-type sequences and operators}

\begin{defi}\label{def:gevrey_seq} Let $m(n)$ denote a sequence
of positive numbers such that $m(0)=1$. Then $m(n)$ will be called
\emph{a Gevrey-type sequence of order $s\geq0$ }if there exist
constants $a,A>0$ satisfying the condition 
\begin{align}
a^{n}n!^{s} & \leq m(n)\leq A^{n}n!^{s}\textrm{ for any }n\in\NN_{0}.\label{eq:gevrey_seq}
\end{align}
\end{defi}

Note that Definition \ref{def:gevrey_seq} encompasses a wide variety
of sequences. In particular, (\ref{eq:gevrey_seq}) describes one
of the main characteristics of so-called moment functions, which
are in turn directly connected to kernel functions. For more information
on the topic we refer the Reader to \cite[Section 5.5]{B}. In this
paper we depart from this concept, but many elements used in the theory
of moment functions and moment differential operators can be easily
adapted to this more general family of sequences. We devote the next
part of this chapter to presenting their various properties.

\begin{prop}[see {\cite[Theorems 31 and 32]{B}}] Let $m_{1}$
and $m_{2}$ be two Gevrey-type sequences of non-negative orders $s_{1}$
and $s_{2}$, respectively. Then $m_{1}\cdot m_{2}$ is a Gevrey-type
sequence of order $s_{1}+s_{2}$. Moreover, if $s_{1}>s_{2}$ then
$\frac{m_{1}}{m_{2}}$ is a Gevrey-type sequence of order $s_{1}-s_{2}.$
\end{prop}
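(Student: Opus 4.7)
The plan is to unpack Definition~\ref{def:gevrey_seq} for both $m_1$ and $m_2$ and then manipulate the resulting inequalities directly. By hypothesis there exist constants $a_1,A_1,a_2,A_2>0$ such that
\begin{equation*}
a_i^n n!^{s_i}\leq m_i(n)\leq A_i^n n!^{s_i}\qquad\text{for all }n\in\NN_0,\ i=1,2,
\end{equation*}
and $m_1(0)=m_2(0)=1$.

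For the product, I would simply multiply the two chains of inequalities termwise. This yields
\begin{equation*}
(a_1 a_2)^n n!^{s_1+s_2}\leq m_1(n)m_2(n)\leq (A_1 A_2)^n n!^{s_1+s_2},
\end{equation*}
and $(m_1 m_2)(0)=1$, which is exactly the condition for $m_1\cdot m_2$ to be Gevrey-type of order $s_1+s_2$ with constants $a=a_1 a_2$ and $A=A_1 A_2$.

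For the quotient, assuming $s_1>s_2$, I would combine the lower bound of $m_1$ with the upper bound of $m_2$ (and vice versa), giving
\begin{equation*}
\Bigl(\tfrac{a_1}{A_2}\Bigr)^{n} n!^{s_1-s_2}\leq \frac{m_1(n)}{m_2(n)} \leq \Bigl(\tfrac{A_1}{a_2}\Bigr)^{n} n!^{s_1-s_2}.
\end{equation*}
Since $a_1/A_2$ and $A_1/a_2$ are positive and $(m_1/m_2)(0)=1$, this certifies $m_1/m_2$ as a Gevrey-type sequence of order $s_1-s_2\geq 0$ (well-defined because $s_1>s_2$, so $n!^{s_1-s_2}$ is a genuine Gevrey weight and both bounding constants are finite and positive).

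There is no real obstacle here: the proposition reduces to termwise multiplication and division of the defining inequalities, the only slight care being to pair lower with upper bounds correctly in the quotient case and to confirm positivity of the resulting constants, which is automatic.
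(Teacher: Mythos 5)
Your computation is correct: termwise multiplication of the two-sided bounds gives the product case with constants $a_1a_2$ and $A_1A_2$, and cross-pairing lower with upper bounds gives the quotient case with constants $a_1/A_2$ and $A_1/a_2$, the normalization $m_1(0)m_2(0)=1$ and $m_1(0)/m_2(0)=1$ being immediate. The paper, however, does not argue this way at all: it gives no proof and simply cites Theorems 31 and 32 of Balser's book, which establish the analogous closure properties for moment sequences attached to kernel functions. Your route is therefore genuinely different in character, and arguably better suited to the setting: a Gevrey-type sequence here is defined only by the two-sided estimate in Definition~\ref{def:gevrey_seq}, with no kernel function in sight, so the cited theorems do not literally apply to this wider class and one must in any case observe (as you do) that the inequalities alone suffice. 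What the citation buys is brevity and consistency with the moment-function literature; what your direct verification buys is a self-contained proof valid for the full generalized class, with explicit constants, and with the small but worthwhile observation that $s_1>s_2$ guarantees the resulting order $s_1-s_2$ is admissible (non-negative).
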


Below we present several examples of Gevrey sequences.

\begin{example}\label{ex:gevrey_seq}
\leavevmode
\makeatletter
\@nobreaktrue
\makeatother
\begin{itemize}
\item Function $\Gamma_{s}(n)=\Gamma(1+sn)$, where $s>0$, is a Gevrey-type
sequence of order $s$. 
\item Let us put $[n]_{q}=1+q+\ldots+q^{n-1}=\frac{q^{n}-1}{q-1}$ for a
fixed $q\in(0,\,1)$. Then a sequence given by the formula 
\begin{equation*}
[n]_{q}!=\begin{cases}
[n]_{q}\cdot[n-1]_{q}\cdot\ldots\cdot[1]_{q} & \textrm{ for }n\geq1\\
1 & \textrm{ for }n=0
\end{cases}
\end{equation*}
is a Gevrey-type sequence of order $0$. Indeed, it is easy to see
that 
\begin{equation*}
1\leq[n]_{q}!\leq\frac{1}{(1-q)^{n}}\textrm{ for any }n\in\NN_{0}.
\end{equation*}
\end{itemize}
\end{example}

We will also use a special class of sequences defined below.

\begin{defi} If $m(n)$ is a sequence of positive numbers such that
$m(0)=1$ and $s\geq0$ then we shall call $m(n)$ \emph{a regular
Gevrey-type sequence of order $s$ }if there exist constants $a,A>0$
satisfying the condition 
\begin{align}
a(n+1)^{s} & \leq\frac{m(n+1)}{m(n)}\leq A(n+1)^{s}\textrm{ for any }n\in\NN_{0}.\label{eq:gevrey_reg}
\end{align}
\end{defi}

\begin{remark} It is easy to observe that every regular Gevrey-type
sequence of order $s$ is a Gevrey-type sequence of the same order
and for the same constants $a$ and $A$.
\end{remark}

Regular Gevrey-type sequences satisfy the following properties:
\begin{lemma}[compare {\cite[Lemma 2.1]{MicS}}]
\leavevmode
\makeatletter
\@nobreaktrue
\makeatother
\begin{enumerate}[(i)]
\item The class of regular Gevrey-type sequences is closed under multiplication
and division.\label{enu:gevrey_reg_class1} 
\item The class of regular Gevrey-type sequences contains the sequence $\Gamma_{s}(n)=\Gamma(1+sn)$.\label{enu:gevrey_reg_class2} 
\end{enumerate}
\end{lemma}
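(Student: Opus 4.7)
My plan for (i) is to reduce everything to manipulating the consecutive-term ratios. If $m_{1},m_{2}$ are regular Gevrey-type sequences of orders $s_{1},s_{2}$ with constants $a_{j},A_{j}$ ($j=1,2$), then
\begin{equation*}
\frac{(m_{1}m_{2})(n+1)}{(m_{1}m_{2})(n)}=\frac{m_{1}(n+1)}{m_{1}(n)}\cdot\frac{m_{2}(n+1)}{m_{2}(n)},
\end{equation*}
so multiplying the two-sided estimates in (\ref{eq:gevrey_reg}) term by term yields the lower bound $a_{1}a_{2}(n+1)^{s_{1}+s_{2}}$ and the upper bound $A_{1}A_{2}(n+1)^{s_{1}+s_{2}}$, showing $m_{1}m_{2}$ is regular Gevrey-type of order $s_{1}+s_{2}$. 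For the quotient I would proceed in the same way with
\begin{equation*}
\frac{(m_{1}/m_{2})(n+1)}{(m_{1}/m_{2})(n)}=\frac{m_{1}(n+1)/m_{1}(n)}{m_{2}(n+1)/m_{2}(n)},
\end{equation*}
assuming $s_{1}\geq s_{2}$ so that the resulting order is admissible; the constants come out as $a_{1}/A_{2}$ and $A_{1}/a_{2}$.

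For (ii), the case $s=0$ is immediate since $\Gamma_{0}(n)\equiv 1$. For $s>0$ the task reduces to estimating
\begin{equation*}
\frac{\Gamma_{s}(n+1)}{\Gamma_{s}(n)}=\frac{\Gamma(1+sn+s)}{\Gamma(1+sn)}
\end{equation*}
from both sides by $a(n+1)^{s}$ and $A(n+1)^{s}$. I would invoke the classical Gamma-ratio asymptotic $\Gamma(x+s)/\Gamma(x)\sim x^{s}$ as $x\to\infty$, made quantitative for instance by Wendel's inequality on $s\in(0,1]$, and then use the recursion $\Gamma(x+1)=x\Gamma(x)$ to reduce arbitrary $s>0$ to that base range via the decomposition $s=k+t$ with $k\in\NN_{0}$, $t\in[0,1)$. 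This yields two-sided bounds of the shape $c_{1}(1+sn)^{s}\leq\Gamma(1+sn+s)/\Gamma(1+sn)\leq c_{2}(1+sn)^{s}$, and the elementary inequalities $\min(s,1)(n+1)\leq 1+sn\leq(1+s)(n+1)$ then give the required comparison with $(n+1)^{s}$.

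The only subtle point I expect is making the Gamma-ratio bound uniform in $n\geq 0$ rather than merely asymptotic in $n$. Once the tail is controlled, the finitely many small-$n$ ratios $\Gamma_{s}(n+1)/\Gamma_{s}(n)$ are positive constants that can be absorbed by decreasing $a$ or enlarging $A$. I anticipate this absorption step, together with the elementary reduction from general $s>0$ to the base range $(0,1]$, to be the only technical ingredients of the argument.
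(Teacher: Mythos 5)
Your proposal is correct, but note that the paper itself gives no proof of this lemma at all: it is stated by analogy with \cite[Lemma 2.1]{MicS}, where the corresponding facts are established for regular moment functions, so your argument is a self-contained replacement for a citation rather than an alternative to an in-text proof. Your treatment of (i) is exactly the natural one, since the defining condition \eqref{eq:gevrey_reg} is a two-sided bound on the ratio $m(n+1)/m(n)$ and this ratio is multiplicative; the only caveat, which you correctly flag, is that for the quotient one needs $s_{1}\geq s_{2}$ so that the resulting order is non-negative (the paper's statement is silent on this, just as the earlier Proposition on Gevrey-type sequences imposes $s_{1}>s_{2}$ for division). For (ii), the route via a quantitative Gamma-ratio bound (Wendel's inequality for the fractional part, the recursion $\Gamma(x+1)=x\Gamma(x)$ for the integer part) does give two-sided estimates $c_{1}(1+sn)^{s}\leq\Gamma(1+sn+s)/\Gamma(1+sn)\leq c_{2}(1+sn)^{s}$, and your elementary comparison $\min(s,1)(n+1)\leq 1+sn\leq(1+s)(n+1)$ converts these into the required bounds by $a(n+1)^{s}$ and $A(n+1)^{s}$. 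The uniformity issue you worry about is in fact harmless: since $x=1+sn\geq 1$ for all $n\in\NN_{0}$, Wendel's two-sided bound is already uniform on the whole range and no separate absorption of small $n$ is needed (though including it does no damage). One could alternatively get (ii) from Stirling's formula or from the log-convexity of $\Gamma$, but these are just variants of the same estimate; your argument is complete in substance and matches what the cited reference provides.
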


\begin{defi}[see \cite{BY}]\label{def:gevrey_op} Let $m$ be a Gevrey-type sequence.
Then we define a \emph{Gevrey-type differential operator} $\partial_{m,t}\colon\mathbb{E}[[t]]\to\mathbb{E}[[t]]$
by the formula: 
\begin{equation*}
\partial_{m,t}\left(\sum_{n=0}^{\infty}\frac{u_{n}}{m(n)}t^{n}\right):=\sum_{n=0}^{\infty}\frac{u_{n+1}}{m(n)}t^{n}.
\end{equation*}
\end{defi}

A couple of examples of Gevrey-type differential operators can be found below:

\begin{example}
\leavevmode
\makeatletter
\@nobreaktrue
\makeatother
\begin{itemize}
\item For $m(n)=n!$ operator $\partial_{m,t}$ coincides with the standard
derivative $\partial_{t}$.
\item Consider a Gevrey-type sequence $\Gamma_{s}(n)$ defined in Example
\ref{ex:gevrey_seq}. Then we have 
\begin{equation*}
\partial_{t}^{s}u(t^{s})=\left(\partial_{\Gamma_{s},w}u\right)(t^{s})
\end{equation*}
with $\partial_{t}^{s}$ being the Caputo fractional derivative.
\item Suppose that $m(n)=[n]_{q}!$ for a certain $q\in(0,\,1).$ We have
established in Example \ref{ex:gevrey_seq} that $m(n)$ is a Gevrey-type
sequence of order $0$. We will show that the $q$-difference differential
operator given by
\begin{equation*}
D_{q,t}u(t)=\frac{u(qt)-u(t)}{qt-t}
\end{equation*}
is equal to the Gevrey-type operator $\partial_{m,t}$ for any function
$u(t)=\sum_{n=0}^{\infty}\frac{u_{n}}{[n]_{q}!}t^{n}$. Indeed, let
us note that
\begin{equation*}
D_{q,t}t^{n}=\frac{q^{n}t^{n}-t^{n}}{qt-t}=[n]_{q}t^{n-1}\textrm{ for any }n\in\NN.
\end{equation*}
 Hence, 
\begin{equation*}
D_{q,t}u(t)=\sum_{n=1}^{\infty}\frac{u_{n}}{[n]_{q}!}[n]_{q}t^{n-1}=\sum_{n=1}^{\infty}\frac{u_{n}}{[n-1]_{q}!}t^{n-1}=\partial_{m,t}\left(\sum_{n=0}^{\infty}\frac{u_{n}}{[n]_{q}!}t^{n}\right).
\end{equation*}
\end{itemize}
\end{example}

We can now recall the basic definition of the Gevrey order.

\begin{defi} \label{def:gevrey_order} The formal series $\hat{u}(t)=\sum_{n=0}^{\infty}u_{n}t^{n}\in\mathbb{E}[[t]]$
is of Gevrey order $s$ if and only if there exist $B,C>0$ such that
\begin{gather*}
\|u_{n}\|_{\EE}\leq BC^{n}n!^{s}\quad\textrm{for}\quad n\in\NN_{0}.
\end{gather*}
Then we write that $\hat{u}(t)\in\mathbb{E}[[t]]_s$.
\end{defi}

\begin{remark}

Definition \ref{def:gevrey_order} implies that any formal power series
of Gevrey order $0$ is convergent.

\end{remark}

%Newton polygon
\subsection{The Newton polygon}

The Newton polygon for linear partial differential operators with variable coefficients has been introduced in \cite{Y}. The notion was later extended in \cite{Mic2} to moment partial differential equations with constant coefficients with two variables. The definition presented below is based on \cite{MicS}, where the Newton polygon was defined for linear moment partial differential operators with time-dependent coefficients.

Let $m_{0},m_{1},\dots,m_{N}$ be Gevrey sequences of positive orders
$s_{0},s_{1},$ $\dots,$ $s_{N}$, respectively, and suppose that $\partial_{m_{0},t}$,
$\partial_{m_{1},z_{1}},\dots,\partial_{m_{N},z_{N}}$ are Gevrey-type
differential operators. For $\bd m=(m_{1},m_{2},\dots,m_{N})$ and
any multi-index $\bd\alpha\in\NN_{0}^{N}$ we use a notation $\partial_{\bd m,\bd z}^{\bd\alpha}:=\partial_{m_{1},z_{1}}^{\alpha_{1}}\partial_{m_{2},z_{2}}^{\alpha_{2}}\dots\partial_{m_{N},z_{N}}^{\alpha_{N}}.$
Suppose that $J\subset\NN_{0}$ and $A\subset\NN_{0}^{N}$ are finite
sets of indices. We shall consider an operator of the form: 
\begin{align}
P\left(t,\bd z,\partial_{m_{0},t},\partial_{\bd m,\bd z}\right) & =\sum_{(j,\bd\alpha)\in J\times A}a_{j,\bd\alpha}(t,\bd z)\partial_{m_{0},t}^{j}\partial_{\bd m,\bd z}^{\bd\alpha},\label{eq:gen_operator}
\end{align}
where $a_{j,\bd\alpha}(t,\bd z)$ are holomorphic with respect to
$t$ in an open neighborhood of zero for every $(j,\bd\alpha)\in J\times A.$
Furthermore let $\bd s=(s_{1},\dots,s_{N}).$

\begin{figure}[hbt]
\includegraphics[width=0.75\textwidth]{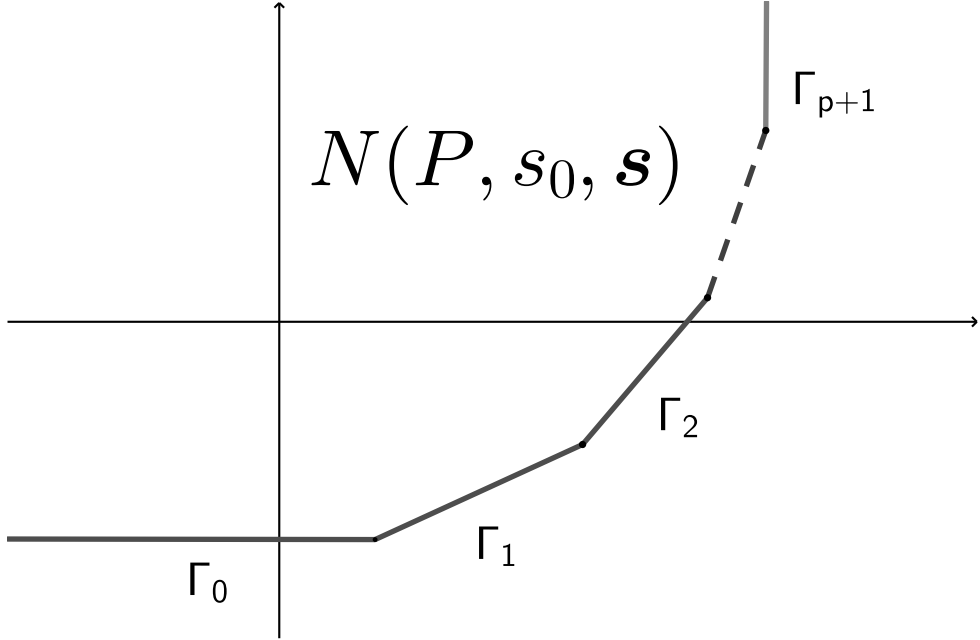} \caption{An example of a Newton polygon.\label{gfx:newton_polygon}}
\end{figure}

\begin{defi}

We define \emph{the Newton polygon for the operator $P$} given
by (\ref{eq:gen_operator}) as 
\begin{equation*}
N(P,s_{0},\bd s)=\textrm{conv}\left\{ \Delta\left(s_{0}j+\bd s\cdot\bd\alpha,\,\ord(a_{j,\bd\alpha})-j\right):\ (j,\bd\alpha)\in J\times A,\ a_{j,\bd\alpha}\not\equiv0\right\} ,
\end{equation*}
where $\Delta(a,b)=\left\{ (x,y)\in\RR^{2}:\ x\leq a,\,y\geq b\right\} $
for any $a,b\in\RR^{2}$.

\end{defi}

An example of a Newton polygon can be seen on Figure \ref{gfx:newton_polygon}.

%modified Nagumo norms
\section{Modified Nagumo norms}\label{Sec3} 

\begin{defi} \label{def:theta_1dim}For any $a\in\NN_{0}$ and $s\geq0$
let us define a power series $\Theta_{s}^{(a)}(x)$ by the formula
\begin{equation}
\Theta_{s}^{(a)}(x):=\sum_{n=0}^{\infty}\frac{(n+a)!^{s}}{n!^{s}}x^{n}.
\end{equation}
\end{defi}

Using series $\Theta_{s}^{(a)}(x)$ we can define a modified version
of the Nagumo norm.

\begin{defi}\label{def:nagumo_norm} For any $\bd s\in[1,\,\infty)^{N}$,
$\bd\alpha\in\NN^{N}$, $0<r<R$ and $\bd z\in D_{r}^{N}$ we define
\emph{the modified Nagumo norm} $\|f(\bd z)\|_{\bd\alpha,r,\bd s}$
of $f\in\Oo(D_{R}^{N})$ by the formula: 
\begin{equation}
\|f(\bd z)\|_{\bd\alpha,r,\bd s}:=\inf\left\{ A\geq0:\ f(\bd z)\ll A\prod_{i=1}^{N}\frac{1}{r^{\alpha_{i}}(\alpha_{i}-1)!^{s_{i}}}\Theta_{s_{i}}^{(\alpha_{i}-1)}\left(\frac{z_{i}}{r}\right)\right\} .\label{eq:nagumo}
\end{equation}

\end{defi}

It is worth mentioning that for $N=1$ and $s=1$ Definition \ref{def:nagumo_norm}
coincides with the standard way of defining the Nagumo norm. Indeed,
let us take $q\in\NN$ and $0<r<R$. Then, for any function $f$,
(\ref{eq:nagumo}) takes form
\begin{equation*}
\|f(z)\|_{q,r,1}:=\inf\left\{ A\geq0:\ f(z)\ll A\frac{1}{r^{q}(q-1)!}\Theta_{1}^{(q-1)}\left(\frac{z}{r}\right)\right\} .
\end{equation*}
Note that
\begin{equation*}
\Theta_{1}^{(q-1)}\left(\frac{z}{r}\right)=\sum_{n=0}^{\infty}\frac{(n+q-1)!}{n!}\frac{z^{n}}{r^{n}}=\frac{\textrm{d}^{q-1}}{\textrm{d}u^{q-1}}\left.\left(\sum_{n=0}^{\infty}u^{n}\right)\right|_{u=r^{-1}z},
\end{equation*}
which gives us
\begin{equation*}
\sup_{|z|<\rho}|f(z)|\leq\frac{A}{r^{q}(q-1)!}\frac{r^{q}(q-1)!}{(r-|z|)^{q}}=\frac{A}{(r-|z|)^{q}}
\end{equation*}
for any $\rho<r$. Hence, $A\geq(r-\rho)^{q}\sup_{|z|<\rho}|f(z)|$.

\begin{remark} Note that $\|\cdot\|_{\bd\alpha,r,\bd s}$ is a norm
on $\Oo(D_{R}^{N})$ for any $\bd\alpha\in\NN^{N}$, $0<r<R$ and
$\bd s\in[1,\infty)^{N}$.
\end{remark}

\begin{remark} \label{rem:norm_0}
For the multi-index $\bd0$ we put $\|f(\bd z)\|_{\bd0,r,\bd s}$
equal to the standard norm in the space $\ell_{1}$ for any $0<r<R$
and $\bd s\in[1,\infty)^{N}$, i.e., if $f(\bd z)=\sum_{\bd\gamma\in\NN^N_0}f_{\bd\gamma}\bd z^{\bd\gamma}$ then
\begin{equation*}
\|f(\bd z)\|_{\bd 0,r,\bd s}=\sum_{\bd\gamma\in\NN^N_0}\left|f_{\bd\gamma}\right|r^{|\bd\gamma|}.
\end{equation*}
\end{remark}

\begin{lemma}\label{lem:nagumo_prop} Let $f,g\in\Oo\left(D_{R}^{N}\right)$,
$\bd\alpha,\,\bd\beta\in\NN^{N}$. Then for the modified Nagumo norm
defined by \eqref{eq:nagumo} the following properties hold: 
\begin{enumerate}[(i)]
\item $\|f(\bd z)\cdot g(\bd z)\|_{\bd\alpha+\bd\beta,r,\bd s}\leq\|f(\bd z)\|_{\bd\alpha,r,\bd s}\|g(\bd z)\|_{\bd\beta,r,\bd s}$,\label{enu:nagumo_multi} 
\item $\|f(\bd z)\cdot g(\bd z)\|_{\bd\alpha,r,\bd s}\leq\|f(\bd z)\|_{\bd\alpha,r,\bd s}\|g(\bd z)\|_{\bd0,r,\bd s}$.\label{enu:nagumo_multi0} 
\end{enumerate}
\end{lemma}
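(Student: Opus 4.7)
The plan is to work directly from the majorant-series definition in Definition \ref{def:nagumo_norm}, and to reduce everything to a coefficientwise comparison of the series $\Theta_{s_i}^{(\alpha_i-1)}$.

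For part (i), I would start by fixing any $A>\|f\|_{\bd\alpha,r,\bd s}$ and $B>\|g\|_{\bd\beta,r,\bd s}$, so that
\[
f(\bd z)\ll A\prod_{i=1}^{N}\frac{\Theta_{s_{i}}^{(\alpha_{i}-1)}(z_{i}/r)}{r^{\alpha_{i}}(\alpha_{i}-1)!^{s_{i}}},\qquad g(\bd z)\ll B\prod_{i=1}^{N}\frac{\Theta_{s_{i}}^{(\beta_{i}-1)}(z_{i}/r)}{r^{\beta_{i}}(\beta_{i}-1)!^{s_{i}}}.
\]
Since the majorant relation is preserved by multiplication, the product $fg$ is majorized by $AB$ times the product of these expressions. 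The task therefore reduces, in each variable separately, to proving the one-variable majorant estimate
\[
\frac{\Theta_{s}^{(\alpha-1)}(x)\Theta_{s}^{(\beta-1)}(x)}{(\alpha-1)!^{s}(\beta-1)!^{s}}\ll\frac{\Theta_{s}^{(\alpha+\beta-1)}(x)}{(\alpha+\beta-1)!^{s}}\qquad(s\geq 1,\ \alpha,\beta\in\NN).
\]

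Expanding both sides via a Cauchy product and dividing out factorials, this is equivalent to the coefficientwise inequality
\[
\sum_{k=0}^{n}\binom{k+\alpha-1}{k}^{s}\binom{n-k+\beta-1}{n-k}^{s}\leq\binom{n+\alpha+\beta-1}{n}^{s}\qquad(n\in\NN_{0}).
\]
This is the main technical point. I would derive it in two steps: first the classical Vandermonde--Chu identity $\sum_{k=0}^{n}\binom{k+\alpha-1}{k}\binom{n-k+\beta-1}{n-k}=\binom{n+\alpha+\beta-1}{n}$, and then the elementary inequality $\sum_{k}a_{k}^{s}\leq(\sum_{k}a_{k})^{s}$, valid for $s\geq 1$ and $a_{k}\geq 0$ (this is why the assumption $\bd s\in[1,\infty)^{N}$ is essential here). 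Combining these two ingredients gives the required one-variable majorant estimate, and multiplying across $i=1,\dots,N$ yields $fg\ll AB\prod_{i}\frac{\Theta_{s_{i}}^{(\alpha_{i}+\beta_{i}-1)}(z_{i}/r)}{r^{\alpha_{i}+\beta_{i}}(\alpha_{i}+\beta_{i}-1)!^{s_{i}}}$, so that $\|fg\|_{\bd\alpha+\bd\beta,r,\bd s}\leq AB$. Letting $A\searrow\|f\|_{\bd\alpha,r,\bd s}$ and $B\searrow\|g\|_{\bd\beta,r,\bd s}$ finishes (i).

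For part (ii), recalling from Remark \ref{rem:norm_0} that $\|g\|_{\bd 0,r,\bd s}=\sum_{\bd\gamma}|g_{\bd\gamma}|r^{|\bd\gamma|}$, I would use the trivial majorant $g\ll\sum_{\bd\gamma}|g_{\bd\gamma}|\bd z^{\bd\gamma}$ and reduce the claim to the monomial-by-monomial statement
\[
\bd z^{\bd\gamma}\prod_{i=1}^{N}\Theta_{s_{i}}^{(\alpha_{i}-1)}(z_{i}/r)\ll r^{|\bd\gamma|}\prod_{i=1}^{N}\Theta_{s_{i}}^{(\alpha_{i}-1)}(z_{i}/r).
\]
In each variable this boils down to checking that the coefficient map $n\mapsto (n+\alpha-1)!^{s}/n!^{s}$ is non-decreasing in $n$, which is obvious since shifting $n\mapsto n+\gamma_{i}$ multiplies each of the $\alpha-1$ factors in $(n+1)(n+2)\cdots(n+\alpha-1)$ by a larger quantity. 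Summing over $\bd\gamma$ and multiplying by $\|f\|_{\bd\alpha,r,\bd s}$ gives the desired bound.

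The main obstacle is clearly the coefficient inequality in step (i); once the Vandermonde identity is used to collapse the convolution to a single binomial and the convexity estimate $\sum a_{k}^{s}\leq(\sum a_{k})^{s}$ is invoked, everything else is bookkeeping on the majorant side.
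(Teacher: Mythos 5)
Your proposal is correct and follows essentially the same route as the paper: part (i) rests on the Vandermonde--Chu convolution identity (the paper's Lemma \ref{lem:binom}) combined with the superadditivity $\sum_k a_k^{s}\leq\bigl(\sum_k a_k\bigr)^{s}$ for $s\geq1$, and part (ii) rests on the monotonicity of $n\mapsto\binom{n+p}{n}$, which is exactly the inequality $\binom{k+p}{k}\leq\binom{n+p}{n}$ for $k\leq n$ used in the paper. Your handling of the infimum via $A\searrow\|f\|_{\bd\alpha,r,\bd s}$ is a minor (and harmless) extra precaution; otherwise the two arguments coincide.
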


Before we can move on to the proof of these facts, let us introduce
the following technical lemma:

\begin{lemma}\label{lem:binom} Let $p,q$ be positive natural numbers.
Then 
\begin{equation}
\sum_{k=0}^{n}\binom{k+p-1}{k}\binom{n-k+q-1}{n-k}=\binom{n+p+q-1}{n}\textrm{ for any }n\in\NN_{0}.\label{eq:binomial}
\end{equation}
\end{lemma}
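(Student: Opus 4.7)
The plan is to prove the identity by a generating-function argument, using the negative binomial (Newton) series as the basic tool. This is essentially the Vandermonde--Chu convolution, and it fits naturally with the formal-power-series framework already in use in the paper.

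First I would recall the standard expansion
\begin{equation*}
(1-x)^{-p} \;=\; \sum_{k=0}^{\infty}\binom{k+p-1}{k}x^{k},
\end{equation*}
valid for any positive integer $p$ as a formal power series identity (it follows from the generalized binomial theorem, or from repeated differentiation of the geometric series, exactly as done in Section \ref{Sec3} for $\Theta_{1}^{(q-1)}$). Applying this with exponents $p$, $q$, and $p+q$ gives three formal power series whose coefficients are precisely the quantities appearing in \eqref{eq:binomial}.

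Next I would compute $(1-x)^{-p}\cdot(1-x)^{-q}$ in two different ways. On one hand, since formal power-series multiplication commutes with the algebraic identity $(1-x)^{-p}(1-x)^{-q}=(1-x)^{-(p+q)}$, the Cauchy product equals $\sum_{n\geq 0}\binom{n+p+q-1}{n}x^{n}$. On the other hand, carrying out the Cauchy product of the two individual expansions yields
\begin{equation*}
\sum_{n=0}^{\infty}\Bigg(\sum_{k=0}^{n}\binom{k+p-1}{k}\binom{n-k+q-1}{n-k}\Bigg)x^{n}.
\end{equation*}
Comparing the coefficients of $x^n$ in these two expressions gives exactly \eqref{eq:binomial}.

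There is essentially no obstacle here: the proof is a one-line consequence of $(1-x)^{-p}(1-x)^{-q}=(1-x)^{-(p+q)}$ once the negative binomial series is in hand. The only thing worth a sentence of justification is the legitimacy of the Cauchy product, which is immediate in the formal ring $\mathbb{Q}[[x]]$, so no convergence issue arises. As an alternative I would mention the combinatorial reading: both sides count multisets of size $n$ chosen from a $(p+q)$-element set, with the left-hand side obtained by splitting the set into a part of size $p$ and a part of size $q$ and summing over the number $k$ of elements drawn from the first part; this gives an equally short proof if a bijective argument is preferred over generating functions.
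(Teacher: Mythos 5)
Your proposal is correct and follows essentially the same route as the paper: the paper also obtains the negative binomial series by differentiating the geometric series $p-1$ times and then compares the Cauchy product of the expansions of $(1-x)^{-p}$ and $(1-x)^{-q}$ with the expansion of $(1-x)^{-(p+q)}$. The only difference is cosmetic (you cite the generalized binomial theorem and add an optional combinatorial aside), so no further changes are needed.
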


\begin{proof} First let us note that for any $x\in\RR$ satisfying
$|x|<1$ it is true that 
\begin{equation}
\frac{1}{1-x}=\sum_{n=0}^{\infty}x^{n}.\label{eq:geom_series}
\end{equation}
After differentiating both sides of \eqref{eq:geom_series} $p-1$
times we receive 
\begin{equation*}
\frac{(p-1)!}{(1-x)^{p}}=\sum_{n=0}^{\infty}\frac{\left(n+p-1\right)!}{n!}x^{n}.
\end{equation*}
Hence, 
\begin{align*}
\frac{1}{\left(1-x\right)^{p+q}} & =\left(\sum_{n=0}^{\infty}\frac{\left(n+p-1\right)!}{n!(p-1)!}x^{n}\right)\left(\sum_{n=0}^{\infty}\frac{\left(n+q-1\right)!}{n!(q-1)!}x^{n}\right)\\
 & =\sum_{n=0}^{\infty}\sum_{k=0}^{n}\binom{k+p-1}{k}\binom{n-k+q-1}{n-k}x^{n}.
\end{align*}
On the other hand, we have 
\begin{equation*}
\frac{(p+q-1)!}{(1-x)^{p+q}}=\sum_{n=0}^{\infty}\frac{\left(n+p+q-1\right)!}{n!}x^{n},
\end{equation*}
which leads to the conclusion of this proof.\end{proof}

Let us now return to Lemma \ref{lem:nagumo_prop}.

\begin{proof}[Proof of Lemma \ref{lem:nagumo_prop}]

Let us fix multi-indices $\bd\alpha,\,\bd\beta\in\NN^{N}.$ By the
definitions of the Nagumo norms and $\Theta_{s}^{(a)}(x)$ we have
\begin{equation*}
f(\bd z)\ll\frac{\|f(\bd z)\|_{\bd\alpha,r,\bd s}}{r^{|\bd\alpha|}}\prod_{i=1}^{N}\sum_{n=0}^{\infty}\binom{n+\alpha_{i}-1}{n}^{s_{i}}\frac{z_{i}^{n}}{r^{n}}
\end{equation*}
and 
\begin{equation*}
g(\bd z)\ll\frac{\|g(\bd z)\|_{\bd\beta,r,\bd s}}{r^{|\bd\beta|}}\prod_{i=1}^{N}\sum_{n=0}^{\infty}\binom{n+\beta_{i}-1}{n}^{s_{i}}\frac{z_{i}^{n}}{r^{n}}.
\end{equation*}
From these two facts it follows that 
\begin{align*}
f(\bd z)\cdot g(\bd z) & \ll\frac{\|f(\bd z)\|_{\bd\alpha,r,\bd s}\|g(\bd z)\|_{\bd\beta,r,\bd s}}{r^{|\bd\alpha|+|\bd\beta|}}\prod_{i=1}^{N}\sum_{n=0}^{\infty}\sum_{k=0}^{n}\binom{k+\alpha_{i}-1}{k}^{s_{i}}\binom{n-k+\beta_{i}-1}{n-k}^{s_{i}}\frac{z_{i}^{n}}{r^{n}}.
\end{align*}
Using Lemma \ref{lem:binom} and the fact that $a^{s}+b^{s}\leq(a+b)^{s}$
for any $a,b>0$ and $s\geq1$, we conclude that 
\begin{equation*}
f(\bd z)\cdot g(\bd z)\ll\frac{\|f(\bd z)\|_{\bd\alpha,r,\bd s}\|g(\bd z)\|_{\bd\beta,r,\bd s}}{r^{|\bd\alpha|+|\bd\beta|}}\prod_{i=1}^{N}\sum_{n=0}^{\infty}\binom{n+\alpha_{i}+\beta_{i}-1}{k}^{s_{i}}\frac{z_{i}^{n}}{r^{n}},
\end{equation*}
which finishes the proof of (\ref{enu:nagumo_multi}).

To show (\ref{enu:nagumo_multi0}), first we note that $g(\bd z)=\sum_{\bd\gamma\in\NN_{0}^{N}}g_{\bd\gamma}\bd z^{\bd\gamma}$.
Analogously to the proof of (\ref{enu:nagumo_multi}) we can write:
\begin{align*}
f(\bd z)\cdot g(\bd z) & \ll\frac{\|f(\bd z)\|_{\bd\alpha,r,\bd s}}{r^{|\bd\alpha|}}\left(\prod_{i=1}^{N}\sum_{n=0}^{\infty}\binom{n+\alpha_{i}-1}{n}^{s_{i}}\frac{z_{i}^{n}}{r^{n}}\right)\left(\sum_{\bd\gamma\in\NN_{0}^{N}}\left|g_{\bd\gamma}\right|r^{|\bd\gamma|}\frac{\bd z^{\bd\gamma}}{r^{|\bd\gamma|}}\right)\\
 & =\frac{\|f(\bd z)\|_{\bd\alpha,r,\bd s}}{r^{|\bd\alpha|}}\left(\sum_{\bd\gamma\in\NN_{0}^{N}}\prod_{i=1}^{N}\binom{\gamma_{i}+\alpha_{i}-1}{\gamma_{i}}^{s_{i}}\frac{\bd z^{\bd\gamma}}{r^{|\bd\gamma|}}\right)\left(\sum_{\bd\gamma\in\NN_{0}^{N}}\left|g_{\bd\gamma}\right|r^{|\bd\gamma|}\frac{\bd z^{\bd\gamma}}{r^{|\bd\gamma|}}\right)\\
 & =\frac{\|f(\bd z)\|_{\bd\alpha,r,\bd s}}{r^{|\bd\alpha|}}\sum_{\bd\gamma\in\NN_{0}^{N}}\sum_{\bd\gamma'\leq\bd\gamma}\prod_{i=1}^{N}\binom{\gamma_{i}'+\alpha_{i}-1}{\gamma_{i}'}^{s_{i}}\left|g_{\bd\gamma-\bd\gamma'}\right|r^{|\bd\gamma-\bd\gamma'|}\frac{\bd z^{\bd\gamma}}{r^{|\bd\gamma|}}.
\end{align*}
Since for any $p\geq0$ and $k\leq n$ we have $\binom{k+p}{k}\leq\binom{n+p}{n}$,
it follows that 
\begin{align*}
f(\bd z)\cdot g(\bd z) & \ll\frac{\|f(\bd z)\|_{\bd\alpha,r,\bd s}}{r^{|\bd\alpha|}}\sum_{\bd\gamma\in\NN_{0}^{N}}\prod_{i=1}^{N}\binom{\gamma_{i}+\alpha_{i}-1}{\gamma_{i}}^{s_{i}}\left(\sum_{\bd\gamma'\leq\bd\gamma}\left|g_{\bd\gamma-\bd\gamma'}\right|r^{|\bd\gamma-\bd\gamma'|}\right)\frac{\bd z^{\bd\gamma}}{r^{|\bd\gamma|}}\\
 & \ll\frac{\|f(\bd z)\|_{\bd\alpha,r,\bd s}\|g(\bd z)\|_{\bd0,r,\bd s}}{r^{|\bd\alpha|}}\sum_{\bd\gamma\in\NN_{0}^{N}}\prod_{i=1}^{N}\binom{\gamma_{i}+\alpha_{i}-1}{\gamma_{i}}^{s_{i}}\frac{\bd z^{\bd\gamma}}{r^{|\bd\gamma|}}.
\end{align*}
\end{proof}

\begin{lemma} \label{lem:nagumo_der}Assume that $\bd\alpha\in\NN^{N}$
and $m_{1},\dots,m_{N}$ are regular Gevrey-type sequences of orders $s_{1},\dots,s_{N}$,
respectively, with every $s_{j}\geq 1$. Then 
\begin{equation*}
\|\partial_{m_{j},z_{j}}f(\bd z)\|_{\bd\alpha+\bd e_{j},r,\bd s}\leq C\alpha_{j}^{s_{j}}\|f(\bd z)\|_{\bd\alpha,r,\bd s}
\end{equation*}
for $j=1,\dots,N$, where $\bd e_{j}$ denotes a multi-index with
a $1$ in the $j$-th coordinate and zeros everywhere else.

\end{lemma}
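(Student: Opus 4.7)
The plan is to read off the coefficients of $\partial_{m_j,z_j}f$ from those of $f$, use the regular Gevrey bound on $m_j$, and then compare the resulting majorant with the one defining the norm on the left-hand side.

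First I will unpack the operator coefficient-wise. If $f(\bd z)=\sum_{\bd\gamma\in\NN_0^N}f_{\bd\gamma}\bd z^{\bd\gamma}$, then by Definition \ref{def:gevrey_op} (applied in the variable $z_j$ with the other variables frozen) one gets
\begin{equation*}
\partial_{m_j,z_j}f(\bd z)=\sum_{\bd\gamma\in\NN_0^N}\frac{m_j(\gamma_j+1)}{m_j(\gamma_j)}\,f_{\bd\gamma+\bd e_j}\,\bd z^{\bd\gamma}.
\end{equation*}
Since $m_j$ is a regular Gevrey-type sequence of order $s_j$, the ratio $m_j(\gamma_j+1)/m_j(\gamma_j)$ is bounded by $A(\gamma_j+1)^{s_j}$ with a constant $A>0$ independent of $\gamma_j$.

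Next I will use the majorant definition of the Nagumo norm. Writing $\bd\gamma'=\bd\gamma+\bd e_j$, the inequality $f\ll \frac{\|f\|_{\bd\alpha,r,\bd s}}{r^{|\bd\alpha|}}\prod_i\sum_n\binom{n+\alpha_i-1}{n}^{s_i}(z_i/r)^n$ gives
\begin{equation*}
|f_{\bd\gamma+\bd e_j}|\le \frac{\|f\|_{\bd\alpha,r,\bd s}}{r^{|\bd\alpha|+|\bd\gamma|+1}}\binom{\gamma_j+\alpha_j}{\gamma_j+1}^{s_j}\prod_{i\ne j}\binom{\gamma_i+\alpha_i-1}{\gamma_i}^{s_i}.
\end{equation*}
Multiplying by the factor $m_j(\gamma_j+1)/m_j(\gamma_j)\le A(\gamma_j+1)^{s_j}$ yields a majorant for $\partial_{m_j,z_j}f$. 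The task is to recognise this majorant as the one defining $\|\cdot\|_{\bd\alpha+\bd e_j,r,\bd s}$ up to the claimed multiplicative constant.

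The key binomial identity needed is
\begin{equation*}
\binom{\gamma_j+\alpha_j}{\gamma_j+1}=\frac{\alpha_j}{\gamma_j+1}\binom{\gamma_j+\alpha_j}{\gamma_j},
\end{equation*}
which is immediate from the factorial definition. Inserting this,
\begin{equation*}
(\gamma_j+1)^{s_j}\binom{\gamma_j+\alpha_j}{\gamma_j+1}^{s_j}=\alpha_j^{s_j}\binom{\gamma_j+\alpha_j}{\gamma_j}^{s_j},
\end{equation*}
so the troublesome $(\gamma_j+1)^{s_j}$ cancels and only the harmless $\alpha_j^{s_j}$ remains. The resulting upper majorant is precisely $A\alpha_j^{s_j}\|f\|_{\bd\alpha,r,\bd s}/r^{|\bd\alpha+\bd e_j|}$ times the series $\prod_i\sum_n\binom{n+(\alpha_i+\delta_{ij})-1}{n}^{s_i}(z_i/r)^n$, which by Definition \ref{def:nagumo_norm} gives exactly $\|\partial_{m_j,z_j}f\|_{\bd\alpha+\bd e_j,r,\bd s}\le C\alpha_j^{s_j}\|f\|_{\bd\alpha,r,\bd s}$ with $C=A$.

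The main (and really the only) obstacle is ensuring that the shift of the index $\gamma_j\mapsto\gamma_j+1$ and the shift of the parameter $\alpha_j\mapsto\alpha_j+1$ interact exactly through the identity above; once that is in place the estimate is automatic and the hypothesis $s_j\ge1$ is not even needed for this step (it is required earlier in Lemma \ref{lem:nagumo_prop} via $a^s+b^s\le(a+b)^s$).
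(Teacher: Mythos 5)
Your proof is correct and follows essentially the same route as the paper: the paper also applies the operator to the majorant defining $\|f\|_{\bd\alpha,r,\bd s}$, bounds $m_j(n+1)/m_j(n)$ by $C(n+1)^{s_j}$ via regularity, and absorbs the factor $(n+1)^{s_j}$ through the identity $\frac{(n+\alpha_j)!^{s_j}}{(\alpha_j-1)!^{s_j}\,n!^{s_j}}=\alpha_j^{s_j}\frac{(n+\alpha_j)!^{s_j}}{\alpha_j!^{s_j}\,n!^{s_j}}$, which is precisely your binomial cancellation written in factorial form. Your coefficient-wise presentation and the closing remark about $s_j\geq 1$ are fine; no gap.
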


\begin{proof}

Since $m_{j}$ is a regular Gevrey-type sequence, there exist $c,C>0$
such that 
\begin{equation*}
c(n+1)^{s_{j}}\leq\frac{m_{j}(n+1)}{m_{j}(n)}\leq C(n+1)^{s_{j}}\textrm{ for any }n\in\NN_{0}.
\end{equation*}
Let us now fix $\bd\alpha\in\NN^{N}$. It follows that: 
\begin{align*}
\partial_{m_{j},z_{j}}f(\bd z) & \ll\frac{\|f(\bd z)\|_{\bd\alpha,r,\bd s}}{r^{\alpha_{j}}(\alpha_{j}-1)!^{s_{j}}}\sum_{n=0}^{\infty}\frac{(n+\alpha_{j})!^{s_{j}}m_{j}(n+1)}{(n+1)!^{s_{j}}m_{j}(n)}\frac{z_{j}^{n}}{r^{n+1}}\prod_{i\neq j}\frac{1}{r^{\alpha_{i}}(\alpha_{i}-1)!^{s_{i}}}\Theta_{s_{i}}^{(\alpha_{i}-1)}\left(\frac{z_{i}}{r}\right)\\
 & \ll\frac{\|f(\bd z)\|_{\bd\alpha,r,\bd s}}{r^{\alpha_{j}+1}(\alpha_{j}-1)!^{s_{j}}}\sum_{n=0}^{\infty}\frac{C(n+\alpha_{j})!^{s_{j}}(n+1)^{s_{j}}}{(n+1)!^{s_{j}}}\frac{z_{j}^{n}}{r^{n}}\prod_{i\neq j}\frac{1}{r^{\alpha_{i}}(\alpha_{i}-1)!^{s_{i}}}\Theta_{s_{i}}^{(\alpha_{i}-1)}\left(\frac{z_{i}}{r}\right)\\
 & =\frac{C\|f(\bd z)\|_{\bd\alpha,r,\bd s}}{r^{\alpha_{j}+1}(\alpha_{j}-1)!^{s_{j}}}\sum_{n=0}^{\infty}\frac{(n+\alpha_{j})!^{s_{j}}}{n!^{s_{j}}}\frac{z_{j}^{n}}{r^{n}}\prod_{i\neq j}\frac{1}{r^{\alpha_{i}}(\alpha_{i}-1)!^{s_{i}}}\Theta_{s_{i}}^{(\alpha_{i}-1)}\left(\frac{z_{i}}{r}\right)\\
 & =C\alpha_{j}^{s_{j}}\|f(\bd z)\|_{\bd\alpha,r,\bd s}\prod_{i=1}^{N}\frac{1}{r^{\tilde{\alpha_{i}}}(\tilde{\alpha_{i}}-1)!^{s_{i}}}\Theta_{s_{i}}^{(\tilde{\alpha}_{i}-1)}\left(\frac{z_{i}}{r}\right),
\end{align*}
where $\bd{\tilde{\alpha}}=\bd\alpha+\bd e_{j}$. From this it follows
that $\left\Vert \partial_{m_{j},z_{j}}f(\bd z)\right\Vert _{\bd{\tilde{\alpha}},r,\bd s}\leq C\alpha_{j}^{s_{j}}\|f(\bd z)\|_{\bd\alpha,r,\bd s}$.

\end{proof}

\begin{lemma}\label{lem:counter_down}

Let $f\in\Oo\left(D_{R}^{N}\right).$ Then
\begin{equation*}
\|f(\bd z)\|_{\bd\alpha+\bd\beta,r,\bd s}\leq r^{|\bd\beta|}\|f(\bd z)\|_{\bd\alpha,r,\bd s}
\end{equation*}
for any $0<r<R$, $\bd\alpha\in\NN^{N}\cup\left\{ \bd0\right\} $
and $\bd\beta\in\NN^{N}$.

\end{lemma}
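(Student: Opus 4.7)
The plan is to reduce the inequality to a coefficient-wise majorant comparison of the generating-style series that define the norm. Suppose first that $\bd\alpha\in\NN^N$. Setting $A=\|f(\bd z)\|_{\bd\alpha,r,\bd s}$, by definition we have
\begin{equation*}
f(\bd z)\ll A\prod_{i=1}^N\frac{1}{r^{\alpha_i}(\alpha_i-1)!^{s_i}}\Theta_{s_i}^{(\alpha_i-1)}\!\left(\frac{z_i}{r}\right).
\end{equation*}
Transitivity of $\ll$ reduces the lemma to showing, factor by factor, that
\begin{equation*}
\frac{1}{r^{\alpha_i}(\alpha_i-1)!^{s_i}}\Theta_{s_i}^{(\alpha_i-1)}\!\left(\frac{z_i}{r}\right)\ll\frac{r^{\beta_i}}{r^{\alpha_i+\beta_i}(\alpha_i+\beta_i-1)!^{s_i}}\Theta_{s_i}^{(\alpha_i+\beta_i-1)}\!\left(\frac{z_i}{r}\right),
\end{equation*}
since the powers $r^{\beta_i}$ on the right multiply to the desired factor $r^{|\bd\beta|}$. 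After cancelling $r^{\alpha_i}$, this amounts to checking that the coefficient of $(z_i/r)^{n}$ on the left, namely $\binom{n+\alpha_i-1}{n}^{s_i}$, is bounded by the corresponding coefficient on the right, $\binom{n+\alpha_i+\beta_i-1}{n}^{s_i}$. This is the elementary monotonicity of binomial coefficients in the upper parameter (already invoked at the end of the proof of Lemma \ref{lem:nagumo_prop}), so the claim follows.

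The case $\bd\alpha=\bd 0$ must be treated separately because the norm is then defined via Remark \ref{rem:norm_0}, not by formula \eqref{eq:nagumo}. Writing $f(\bd z)=\sum_{\bd\gamma\in\NN_{0}^{N}}f_{\bd\gamma}\bd z^{\bd\gamma}$, the inequality $|f_{\bd\gamma}|r^{|\bd\gamma|}\leq\|f\|_{\bd 0,r,\bd s}$ holds for every $\bd\gamma$. Combined with the trivial bound $\binom{\gamma_i+\beta_i-1}{\gamma_i}^{s_i}\geq 1$ (valid since $\beta_i\geq 1$), this yields
\begin{equation*}
|f_{\bd\gamma}|\leq\frac{\|f(\bd z)\|_{\bd 0,r,\bd s}}{r^{|\bd\gamma|}}\prod_{i=1}^N\binom{\gamma_i+\beta_i-1}{\gamma_i}^{s_i},
\end{equation*}
which is precisely the coefficient of $\bd z^{\bd\gamma}$ in the majorant $r^{|\bd\beta|}\|f\|_{\bd 0,r,\bd s}\prod_{i=1}^N r^{-\beta_i}(\beta_i-1)!^{-s_i}\Theta_{s_i}^{(\beta_i-1)}(z_i/r)$. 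Hence $f(\bd z)\ll r^{|\bd\beta|}\|f\|_{\bd 0,r,\bd s}\prod_{i}\frac{1}{r^{\beta_i}(\beta_i-1)!^{s_i}}\Theta_{s_i}^{(\beta_i-1)}(z_i/r)$, which gives the required bound on $\|f\|_{\bd\beta,r,\bd s}$.

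There is essentially no obstacle here: the statement is a direct consequence of the monotonicity of $\binom{n+a-1}{n}$ in $a$, and the only minor subtlety is the bookkeeping of the factors $r^{\beta_i}$ and the separate treatment of the degenerate index $\bd\alpha=\bd 0$. I would structure the written proof as a single short argument handling $\bd\alpha\in\NN^N$, followed by a one-paragraph remark treating the case $\bd\alpha=\bd 0$.
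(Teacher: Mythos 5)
Your proof is correct, but it takes a different route than the paper. The paper deduces the lemma in two lines from Lemma \ref{lem:nagumo_prop}: writing $f=f\cdot 1$, it applies the product inequality (part (i) for $\bd\alpha\in\NN^{N}$, part (ii) with the roles of the factors swapped for $\bd\alpha=\bd 0$) and then computes $\|1\|_{\bd\beta,r,\bd s}=r^{|\bd\beta|}$ directly from the definition, which yields $\|f\|_{\bd\alpha+\bd\beta,r,\bd s}\leq\|1\|_{\bd\beta,r,\bd s}\|f\|_{\bd\alpha,r,\bd s}=r^{|\bd\beta|}\|f\|_{\bd\alpha,r,\bd s}$. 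You instead verify the majorant inequality from scratch, comparing the coefficients $\binom{n+\alpha_i-1}{n}^{s_i}$ and $\binom{n+\alpha_i+\beta_i-1}{n}^{s_i}$ term by term; this only needs monotonicity of $\binom{n+a-1}{n}$ in $a$ rather than the convolution identity of Lemma \ref{lem:binom} that underlies the product inequality, so your argument is more elementary and self-contained, at the cost of redoing by hand (including the separate $\bd\alpha=\bd 0$ bookkeeping via Remark \ref{rem:norm_0}) what the paper gets for free from the already established Lemma \ref{lem:nagumo_prop}. Both treatments of the degenerate index $\bd\alpha=\bd 0$ are sound, and your handling of the normalizing factors $r^{\beta_i}$ is consistent with Definition \ref{def:nagumo_norm}; the only cosmetic caveat is that the infimum in \eqref{eq:nagumo} is attained (the admissible set of constants $A$ is closed), which justifies taking $A=\|f\|_{\bd\alpha,r,\bd s}$ in your first display, or one can work with $A+\epsilon$ and let $\epsilon\to 0$.
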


\begin{proof}

This fact follows directly from Lemma \ref{lem:nagumo_prop}. More
precisely, it is true that $\|f(\bd z)\|_{\bd\alpha+\bd\beta,r,\bd s}\leq\|1\|_{\bd\beta,r,\bd s}\|f(\bd z)\|_{\bd\alpha,r,\bd s}$.
Moreover, $\|1\|_{\bd\beta,r,\bd s}=r^{|\bd\beta|}$, because 
\begin{equation*}
1\ll\sum_{\bd\gamma\in\NN_{0}^{N}}\frac{z^{\bd\gamma}}{r^{|\bd\gamma|}}
\end{equation*}
and the same is not true for $(1-\epsilon)\sum_{\bd\gamma\in\NN_{0}^{N}}\frac{z^{\bd\gamma}}{r^{|\bd\gamma|}}$,
no matter how small $\epsilon>0$ we choose. Hence, $1=\frac{\|1\|_{\bd\beta,r,\bd s}}{r^{|\bd\beta|}}$,
which finishes the proof.\end{proof}

The following two lemmas connect Nagumo norms with the Gevrey order
of formal power series.

\begin{lemma}\label{lem:gevrey_norm}

Let us consider $\EE=\Oo\left(D_{R}^{N}\right)\cap C\left(\overline{D}_R^N\right)$ and $f(t,\bd z)=\sum_{n=0}^{\infty}f_{n}(\bd z)t^{n}\in\EE[[t]]_{w}$,
$w\geq0$. Then for any $0<r<R$ and $\bd\alpha\in\NN^{N}$ there
exist $A,B>0$ that satisfy 
\begin{equation*}
\|f_{n}(\bd z)\|_{n\bd\alpha,r,\bd s}\leq AB^{n}n!^{w}\textrm{ for any } n\in\NN_{0}.
\end{equation*}
\end{lemma}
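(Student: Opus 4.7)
The plan is to convert the uniform Gevrey bound supplied by Definition \ref{def:gevrey_order} into a coefficient-wise majorant estimate and then read this off as a Nagumo norm bound. First, the hypothesis $f \in \EE[[t]]_w$ furnishes constants $B', C > 0$ such that $\|f_n\|_\EE \le B' C^n n!^w$ for every $n \in \NN_0$. Expanding $f_n(\bd z) = \sum_{\bd\gamma \in \NN_0^N} f_{n, \bd\gamma} \bd z^{\bd\gamma}$ and applying Cauchy's inequalities on the polydisc of radius $R$ yields the coefficient estimate $|f_{n, \bd\gamma}| \le B' C^n n!^w R^{-|\bd\gamma|}$.

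Next I would unwind the definition of the modified Nagumo norm. For $n \ge 1$ the multi-index $n\bd\alpha$ lies in $\NN^N$, and reading the Taylor coefficients of the majorizing series $\prod_{i=1}^N r^{-n\alpha_i}(n\alpha_i-1)!^{-s_i}\Theta_{s_i}^{(n\alpha_i-1)}(z_i/r)$ shows that $\|f_n\|_{n\bd\alpha, r, \bd s} \le A$ is equivalent to the simultaneous coefficient-wise inequalities
$$|f_{n,\bd\gamma}| \le A \prod_{i=1}^N \binom{\gamma_i + n\alpha_i - 1}{\gamma_i}^{s_i} r^{-(n\alpha_i + \gamma_i)} \qquad \textrm{for every } \bd\gamma \in \NN_0^N.$$
It therefore suffices to exhibit an admissible $A$ growing at most like a constant times $B^n n!^w$ for some $B>0$.

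Testing the candidate $A_n := B'(Cr^{|\bd\alpha|})^n n!^w$ and substituting the Cauchy bound, the required inequality reduces to $(r/R)^{|\bd\gamma|} \le \prod_{i=1}^N \binom{\gamma_i + n\alpha_i - 1}{\gamma_i}^{s_i}$, which is automatic because $0 < r < R$ forces the left side to be at most $1$ while every binomial coefficient on the right is at least $1$. For the edge case $n = 0$ the relevant norm is the $\ell_1$-norm of Remark \ref{rem:norm_0}, which Cauchy's estimate bounds by $B'(1-r/R)^{-N}$; this can be absorbed into the final constant. Taking $A$ sufficiently large and $B := C r^{|\bd\alpha|}$ then yields the claim. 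I do not anticipate a genuine obstacle here, as the argument is pure bookkeeping; the essential observation is that the growth $R^{-|\bd\gamma|}$ produced by Cauchy's estimate is comfortably dominated by the Nagumo majorant coefficients thanks to the strict inequality $r < R$, so the extra factor $r^{n|\bd\alpha|}$ picked up from the weight of the majorant is absorbed harmlessly into the geometric constant without disturbing the factorial part $n!^w$.
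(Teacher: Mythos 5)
Your argument is correct and follows essentially the same route as the paper's proof: both convert the Gevrey hypothesis into coefficient bounds via Cauchy's inequalities, compare coefficient-wise with the $\Theta_{s_i}^{(n\alpha_i-1)}$ majorant using the fact that the binomial coefficients are at least $1$, and treat $n=0$ separately through the $\ell_1$-norm of Remark \ref{rem:norm_0}. The only (immaterial) difference is bookkeeping: the paper estimates $\sup_{D_r^N}|f_n|$ and picks up the factor $R^{n|\bd\alpha|}$ in $B$, while you apply Cauchy on the radius-$R$ polydisc and obtain $B=Cr^{|\bd\alpha|}$.
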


\begin{proof}

Since $f(t,\bd z)$ is of Gevrey order $w$, for any $0<r<R$ there
exist $\tilde{A},\tilde{B}>0$ such that 
\begin{equation*}
\sup_{\bd\zeta\in D_{r}^{N}}\left|f_{n}(\bd\zeta)\right|\leq\tilde{A}\tilde{B}^{n}n!^{w}\textrm{ for }n\in\NN_{0}.
\end{equation*}
Moreover, since $f_{n}(\bd z)\in\Oo\left(D_{R}^{N}\right)$, we can
use Cauchy Inequalities for the power series of the form $f_{n}(\bd z)=\sum_{\bd\gamma\in\NN_{0}^{N}}f_{n,\bd\gamma}\bd z^{\bd\gamma}$.
More precisely, for any $0<r<R$ and $\bd\gamma\in\NN_{0}^{N}$ we
have
\begin{equation*}
\left|f_{n,\bd\gamma}\right|\leq\frac{\sup_{\bd\zeta\in D_{r}^{N}}\left|f_{n}(\bd\zeta)\right|}{r^{|\bd\gamma|}}.
\end{equation*}
Hence, 
\begin{equation*}
f_{n}(\bd z)\ll\sup_{\bd\zeta\in D_{r}^{N}}\left|f_{n}(\bd\zeta)\right|\sum_{\bd\gamma\in\NN_{0}^{N}}\frac{\bd z^{\bd\gamma}}{r^{|\bd\gamma|}}.
\end{equation*}

First suppose that $n\geq 1$. Seeing as $\binom{m}{l}\geq 1$ for any
$l\leq m$, we receive 
\begin{align*}
f_{n}(\bd z) & \ll\sup_{\bd\zeta\in D_{r}^{N}}\left|f_{n}(\bd\zeta)\right|\prod_{i=1}^{N}\sum_{k=0}^{\infty}\binom{k+n\alpha_{i}-1}{k}^{s_{i}}\frac{z_{i}^{k}}{r^{k}}\\
 & \ll\sup_{\bd\zeta\in D_{r}^{N}}\left|f_{n}(\bd\zeta)\right|\prod_{i=1}^{N}\frac{R^{n\alpha_{i}}}{r^{n\alpha_{i}}}\sum_{k=0}^{\infty}\binom{k+n\alpha_{i}-1}{k}^{s_{i}}\frac{z_{i}^{k}}{r^{k}}.\\
 & \ll\sup_{\bd\zeta\in D_{r}^{N}}\left|f_{n}(\bd\zeta)\right|R^{n|\bd\alpha|}\prod_{i=1}^{N}\frac{1}{r^{n\alpha_{i}}}\sum_{k=0}^{\infty}\binom{k+n\alpha_{i}-1}{k}^{s_{i}}\frac{z_{i}^{k}}{r^{k}}.
\end{align*}
From this it follows that $\left\Vert f_{n}(\bd z)\right\Vert _{n\bd\alpha,r,\bd z}\leq R^{n|\bd\alpha|}\sup_{\bd\zeta\in D_{r}^{N}}\left|f_{n}(\bd\zeta)\right|\leq\tilde{A}(\tilde{B}R^{|\bd\alpha|})^{n}n!^{w}$.

For $n=0$ let us notice that $f_{0}(\bd z)\in\Oo(D_{R}^{N})\cap C(\overline{D}_R^N)$ and
\begin{equation*}
f_{0}(\bd z)=\sum_{\bd\gamma\in\NN_{0}^{N}}f_{0,\bd\gamma}\bd z^{\bd\gamma}.
\end{equation*}
Then $|f_{0,\bd\gamma}|\leq\frac{\sup_{\bd\zeta\in D_{R}^{N}}|f_{0}(\bd\zeta)|}{R^{|\bd\gamma|}}$
for any $\bd\gamma\in\NN_{0}^{N}$ and 
\begin{equation*}
\left\Vert f_{0}(z)\right\Vert _{\bd0,r,\bd s}\leq\sup_{\bd\zeta\in D_{R}^{N}}|f_{0}(\bd\zeta)|\sum_{\bd\gamma\in\NN_{0}^{N}}\frac{r^{k}}{R^{k}}\leq\sup_{\bd\zeta\in D_{R}^{N}}|f_{0}(\bd\zeta)|\frac{R^{N}}{\left(R-r\right)^{N}}.
\end{equation*}

Hence it is enough to put $A=\max\left\{ \tilde{A},\,\sup_{\bd\zeta\in D_{R}^{N}}|f_{0}(\bd\zeta)|\frac{R^{N}}{\left(R-r\right)^{N}}\right\} $
and $B=\tilde{B}R^{|\bd\alpha|}$.\end{proof}

\begin{lemma}\label{lem:sup_norm}

Let $f\in\Oo\left(D_{R}^{N}\right).$ Then for any $0<\rho<r<R$ there
exists $A>0$ such that for any $\bd\alpha\in\NN^{N}\cup\left\{ \bd0\right\} $
the following inequality holds: 
\begin{equation*}
\sup_{\bd z\in D_{\rho}^{N}}\left|f(\bd z)\right|\leq A{}^{|\bd\alpha|}\|f(\bd z)\|_{\bd\alpha,r,\bd s}.
\end{equation*}

\end{lemma}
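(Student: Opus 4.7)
The plan is to start from the defining majorization of the Nagumo norm, convert it to a pointwise bound on the smaller polydisc $D_\rho^N$, and then estimate the arising series $\Theta_{s_i}^{(\alpha_i-1)}(\rho/r)$ uniformly in $\bd\alpha$.

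I would first dispose of the case $\bd\alpha=\bd 0$ separately. By Remark \ref{rem:norm_0}, $\|f\|_{\bd 0,r,\bd s}=\sum_{\bd\gamma}|f_{\bd\gamma}|r^{|\bd\gamma|}$, and since $\rho<r$ the triangle inequality yields $|f(\bd z)|\leq\sum_{\bd\gamma}|f_{\bd\gamma}|\rho^{|\bd\gamma|}\leq\|f\|_{\bd 0,r,\bd s}$ for every $\bd z\in D_\rho^N$, so the desired inequality holds with $A^{|\bd 0|}=1$. For $\bd\alpha\in\NN^N$, the definition of $\|\cdot\|_{\bd\alpha,r,\bd s}$ together with $|z_i|<\rho$ yields termwise
$$|f(\bd z)|\leq\|f(\bd z)\|_{\bd\alpha,r,\bd s}\prod_{i=1}^{N}\frac{1}{r^{\alpha_i}(\alpha_i-1)!^{s_i}}\Theta_{s_i}^{(\alpha_i-1)}\!\left(\frac{\rho}{r}\right),$$
since every power-series coefficient in sight is non-negative. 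Everything therefore reduces to establishing that for each $s\geq 1$ and $\theta\in(0,1)$ there exists a constant $M=M(s,\theta)>0$ with
$$\frac{\Theta_s^{(a)}(\theta)}{a!^{s}}=\sum_{n=0}^{\infty}\binom{n+a}{a}^{s}\theta^{n}\leq M^{a+1}\qquad(a\in\NN_0).$$

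The heart of the argument is this last estimate. For $s=1$ it follows at once from the closed form $\sum_n\binom{n+a}{a}\theta^n=(1-\theta)^{-(a+1)}$, but for $s>1$ the naive bound $\binom{n+a}{a}^{s}\leq 2^{s(n+a)}$ is too crude, as summability would require $2^{s}\theta<1$. To circumvent this, I would apply Cauchy's coefficient estimate to $(1-x)^{-(a+1)}=\sum\binom{n+a}{a}x^{n}$: for any $\eta\in(0,1)$ one obtains $\binom{n+a}{a}\leq\eta^{-n}(1-\eta)^{-(a+1)}$. Raising to the $s$-th power and summing a geometric series then produces
$$\sum_{n=0}^{\infty}\binom{n+a}{a}^{s}\theta^{n}\leq\frac{(1-\eta)^{-s(a+1)}}{1-\theta\eta^{-s}},$$
which is of the required form whenever $\eta>\theta^{1/s}$, and such $\eta<1$ exists because $\theta^{1/s}<1$.

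Feeding this estimate back into the pointwise bound and setting $A=\max_{i}M_i/r$, where $M_i=M(s_i,\rho/r)$ is enlarged if necessary to absorb the $a$-independent factor $(1-\theta\eta^{-s})^{-1}$, the product over $i$ collapses to $A^{|\bd\alpha|}$ and the lemma follows. The principal obstacle is obtaining the uniform $M^{a+1}$ bound for $s>1$; once the Cauchy estimate with a well-chosen $\eta$ is in place, the remaining bookkeeping is routine.
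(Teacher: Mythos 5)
Your proof is correct and follows essentially the same route as the paper: the case $\bd\alpha=\bd 0$ via the $\ell_1$-norm, then the coefficientwise majorization turned into a pointwise bound on $D_\rho^N$, and finally a bound $\binom{n+a}{a}\leq\eta^{-n}(1-\eta)^{-(a+1)}$ with an intermediate parameter chosen using $\rho<r$ to force a geometric factor $A^{|\bd\alpha|}$. The only difference is bookkeeping: you bound the full $s_i$-th power of the binomial and sum a geometric series, whereas the paper applies the analogous estimate only to the power $s_i-1$ and sums the remaining series $\sum_n\binom{n+\alpha_i-1}{n}x^n$ in closed form; both yield the same kind of constant.
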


\begin{proof} When $\bd\alpha=\bd0$ it suffices to notice that for
certain $\theta_{1},\dots,\theta_{N}\in[0,\,2\pi)$ we have 
\begin{equation*}
\sup_{\bd z\in D_{\rho}^{N}}\left|f(\bd z)\right|=\left|\sum_{\bd\gamma\in\NN_{0}^{N}}f_{\bd\gamma}\prod_{i=1}^{N}\rho^{\gamma_{i}}e^{i\theta_{i}\gamma_{i}}\right|\leq\sum_{\bd\gamma\in\NN_{0}^{N}}\left|f_{\bd\gamma}\right|\rho^{|\bd\gamma|}\leq\sum_{\bd\gamma\in\NN_{0}^{N}}\left|f_{\bd\gamma}\right|r^{\bd\gamma}=\|f(\bd z)\|_{\bd0,r,\bd s}.
\end{equation*}

To prove the same for $\bd\alpha\in\NN^{N}$ first let us note that
for any $a,b\geq0$, $p\in\NN$ and $n\in\NN_{0}$ we have 
\begin{align}
\binom{n+p-1}{n}a^{n}b^{p-1} & \leq(a+b)^{n+p-1}.\label{eq:binom_ab}
\end{align}
In particular, if we set $\epsilon>0$ small enough that $\rho(1+\epsilon)^{|\bd s|-N}<r$
then \eqref{eq:binom_ab} holds for $a+b=1$ and $a^{-1}=1+\epsilon.$
Then for any $n\in\NN_{0}$ and $i=1,\dots,N$ we get 
\begin{equation*}
\binom{n+\alpha_{i}-1}{n}\leq\left(\frac{1+\epsilon}{\epsilon}\right)^{\alpha_{i}-1}\left(1+\epsilon\right)^{n}.
\end{equation*}
Now we can apply this fact to majorize $f(\bd z)$ in the following
way: 
\begin{align*}
f(\bd z) & \ll\frac{\|f(\bd z)\|_{\bd\alpha,r,\bd s}}{r^{|\bd\alpha|}}\prod_{i=1}^{N}\sum_{n=0}^{\infty}\binom{n+\alpha_{i}-1}{n}^{s_{i}}\frac{z_{i}^{n}}{r^{n}}\\
 & \ll\frac{\|f(\bd z)\|_{\bd\alpha,r,\bd s}}{r^{|\bd\alpha|}}\prod_{i=1}^{N}\sum_{n=0}^{\infty}\left(\frac{1+\epsilon}{\epsilon}\right)^{(\alpha_{i}-1)(s_{i}-1)}\left(1+\epsilon\right)^{n(s_{i}-1)}\binom{n+\alpha_{i}-1}{n}\frac{z_{i}^{n}}{r^{n}}\\
 & \ll\frac{\|f(\bd z)\|_{\bd\alpha,r,\bd s}}{r^{|\bd\alpha|}}\left(\frac{1+\epsilon}{\epsilon}\right)^{\bd s\cdot\bd\alpha}\prod_{i=1}^{N}\sum_{n=0}^{\infty}\binom{n+\alpha_{i}-1}{n}\left(\frac{z_{i}\left(1+\epsilon\right)^{s_{i}-1}}{r}\right)^{n}.
\end{align*}
In particular, the majorization above is true for any $\bd z\in D_{\rho}^{N}$
and so we can conclude that 
\begin{equation*}
\sup_{\bd z\in D_{\rho}^{N}}\left|f(\bd z)\right|\leq\frac{\|f(\bd z)\|_{\bd\alpha,r,\bd s}}{r^{|\bd\alpha|}}\left(\frac{1+\epsilon}{\epsilon}\right)^{\bd s\cdot\bd\alpha}\prod_{i=1}^{N}\sum_{n=0}^{\infty}\binom{n+\alpha_{i}-1}{n}\left(\frac{\rho\left(1+\epsilon\right)^{s_{i}-1}}{r}\right)^{n}.
\end{equation*}

Moreover, because of the properties of our chosen $\epsilon$, for
$i=1,\dots,N$ we have 
\begin{equation*}
\sum_{n=0}^{\infty}\frac{\left(n+\alpha_{i}-1\right)!}{n!}\left(\frac{\rho\left(1+\epsilon\right)^{s_{i}-1}}{r}\right)^{n}=\frac{d^{\alpha_{i}-1}}{du^{\alpha_{i}-1}}\left.\frac{1}{1-u}\right|_{u=\rho\left(1+\epsilon\right)^{s_{i}-1}r^{-1}}.
\end{equation*}
From this fact it follows that 
\begin{align*}
\sup_{\bd z\in D_{\rho}^{N}}\left|f(\bd z)\right| & \leq\frac{\|f(\bd z)\|_{\bd\alpha,r,\bd s}}{r^{|\bd\alpha|}}\left(\frac{1+\epsilon}{\epsilon}\right)^{\bd s\cdot\bd\alpha}\frac{r^{|\bd\alpha|}}{\left[r-(1+\epsilon)^{|\bd s|-N}\rho\right]^{|\bd\alpha|}}\\
 & \leq\|f(\bd z)\|_{\bd\alpha,r,\bd s}\left\{ \frac{(1+\epsilon)^{|\bd s|}}{\epsilon^{|\bd s|}\left[r-(1+\epsilon)^{|\bd s|-N}\rho\right]}\right\} ^{|\bd\alpha|}.
\end{align*}
Therefore it is enough to put $A=\max\left\{ 1,\,\frac{(1+\epsilon)^{|\bd s|}}{\epsilon^{|\bd s|}\left[r-(1+\epsilon)^{|\bd s|-N}\rho\right]}\right\} .$\end{proof}

%main problem
\section{The linear Cauchy problem}\label{Sec4}

Suppose that $t\in\CC$ and $\bd z\in\CC^{N},$ $N\geq1$, and let
$\bd m=(m_{1},\dots,m_{N})$, where $m_{0},m_{1},\dots,m_{N}$ are
regular Gevrey-type sequences of non-negative orders $s_{0},s_{1},\dots,s_{N}$,
respectively. Then we define the Gevrey-type differential operator
$P\left(\partial_{m_{0},t},\partial_{\bd m,\bd z}\right)$ by the
formula: 
\begin{equation}
P\left(\partial_{m_{0},t},\partial_{\bd m,\bd z}\right)=\partial_{m_{0},t}^{M}+\sum_{(j,\bd\alpha)\in\Lambda}a_{j,\bd\alpha}(t,\bd z)\partial_{m_{0},t}^{j}\partial_{\bd m,\bd z}^{\bd\alpha}\label{eq:operator}
\end{equation}
with additional assumptions: 
\begin{enumerate}[(a)]
\item $\bd s=(s_{1},s_{2},\dots,s_{N})\in[1,\infty)^{N}$, \label{enu:s} 
\item $\Lambda\subset\NN_{0}^{1+N}$ is a finite set of indices, \label{enu:Lambda}
\item $\ord\left(a_{j,\bd\alpha}\right)\geq\max\left\{ 0,\,j-M+1\right\} $
for all $(j,\bd\alpha)\in\Lambda$.\label{enu:ord_t_a} 
\end{enumerate}
Our first step is to analyze the Newton polygon for (\ref{eq:operator}).
In this case it is given by
\begin{equation*}
N(P,s_{0},\bd s)=\textrm{conv}\left\{ \Delta(s_{0}M,\,-M)\cup\bigcup_{(j,\bd\alpha)\in\Lambda}\Delta\left(s_{0}j+\bd s\cdot\bd\alpha,\,\ord(a_{j,\bd\alpha})-j\right)\right\} .
\end{equation*}
Note that the first non-horizontal segment (if it exists) of the boundary
of $N(P,s_{0},\bd s)$ connects points with coordinates $(s_{0}M,-M)$
and $\left(s_{0}j^{*}+\bd s\cdot\bd\alpha^{*},\,\ord(a_{j^{*},\bd\alpha^{*}})-j^{*}\right)$
for a certain $(j^{*},\bd\alpha^{*})\in\Lambda$. The slope of this
segment is positive and given by the formula
\begin{equation*}
k_{1}=\frac{\ord(a_{j^{*},\bd\alpha^{*}})-j^{*}+M}{s_{0}(j^{*}-M)+\bd s\cdot\bd\alpha^{*}}.
\end{equation*}
Alternatively, we can define $k_{1}$ using the formula
\begin{equation*}
\frac{1}{k_{1}}=\max\left\{ 0,\,\max_{(j,\bd\alpha)\in\Lambda}\left\{ \frac{s_{0}(j-M)+\bd s\cdot\bd\alpha}{\ord\left(a_{j,\bd\alpha}\right)-j+M}\right\} \right\} .
\end{equation*}

Let us consider a linear Cauchy problem of the form 
\begin{equation}
\left\{ \begin{aligned}P\left(\partial_{m_{0},t},\partial_{\bd m,\bd z}\right)u(t,\bd z) & =f(t,\bd z)\\
\partial_{m_{0},t}^{j}u(0,\bd z) & =\phi_{j}(\bd z)\textrm{ for }0\leq j<M
\end{aligned}
\right.,\label{eq:cauchy}
\end{equation}
further assuming that:
\begin{enumerate}[(a)]
\setcounter{enumi}{3}
\item $f(t,\bd z)\in\Oo\left(D_{R}^{N}\right)[[t]]_{1/k_{1}}$ and $a_{j,\bd\alpha}(t,\bd z)\in\Oo\left(D_{R}^{N}\right)[[t]]_{1/k_{1}}$
for all $(j,\bd\alpha)\in\Lambda$, \label{enu:gevrey_order-Ndim} 
\item $\phi_{j}\in\Oo\left(D_{R}^{N}\right)$ for $0\leq j<M.$\label{enu:phi_j} 
\end{enumerate}
Our aim is to prove the following theorem:

\begin{thm}\label{thm:gevrey}

Under assumptions (\ref{enu:s})-(\ref{enu:phi_j}) listed above
the formal solution $\hat{u}(t,\bd z)=\sum_{n=0}^{\infty}u_{n}(\bd z)t^{n}$
of \eqref{eq:cauchy} is of the Gevrey order $\frac{1}{k_{1}}.$

\end{thm}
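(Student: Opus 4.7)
The strategy is to reduce Theorem \ref{thm:gevrey} to a Nagumo-norm estimate of the form
\begin{equation*}
\|u_n(\bd z)\|_{n\bd\beta,r,\bd s} \leq KL^n n!^{1/k_1} \qquad (n \in \NN_0),
\end{equation*}
which I expect is the content of Proposition \ref{prop:norm_est}. Lemma \ref{lem:sup_norm} then turns this into $\sup_{D_\rho^N}|u_n| \leq K(LA^{|\bd\beta|})^n n!^{1/k_1}$ on any smaller polydisc, giving the desired Gevrey order $1/k_1$. The auxiliary multi-index $\bd\beta \in \NN^N$ should be chosen coordinate-wise larger than every $\bd\alpha$ occurring in $\Lambda$ so that the index shifts below remain admissible for Lemma \ref{lem:counter_down}.

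First, expanding $a_{j,\bd\alpha}(t,\bd z) = \sum_l a_{j,\bd\alpha,l}(\bd z) t^l$ and $\hat u(t,\bd z) = \sum_n u_n(\bd z) t^n$ and equating the $t^n$-coefficients on both sides of $P(\partial_{m_0,t},\partial_{\bd m,\bd z})\hat u = f$, I obtain the recurrence
\begin{equation*}
u_{n+M}(\bd z) = \frac{m_0(n)}{m_0(n+M)}\Bigl[f_n(\bd z) - \sum_{(j,\bd\alpha) \in \Lambda}\sum_{l}a_{j,\bd\alpha,l}(\bd z)\frac{m_0(n-l+j)}{m_0(n-l)}\partial_{\bd m,\bd z}^{\bd\alpha} u_{n-l+j}(\bd z)\Bigr].
\end{equation*}
Assumption (c), $\ord(a_{j,\bd\alpha}) \geq j-M+1$, forces $l \geq j-M+1$, so $n-l+j \leq n+M-1$ and only previously determined coefficients appear on the right-hand side; the initial data (e) provide $u_0,\dots,u_{M-1}$.

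Applying the norm $\|\cdot\|_{(n+M)\bd\beta,r,\bd s}$ to the recurrence, I estimate the inhomogeneous term via Lemma \ref{lem:gevrey_norm} together with Lemma \ref{lem:counter_down} by $\lesssim B^n n!^{1/k_1}$. Each operator term is handled by: Lemma \ref{lem:nagumo_prop}(ii) to factor off $a_{j,\bd\alpha,l}$, whose $\bd 0$-norm is bounded by $\lesssim B^l l!^{1/k_1}$ through Cauchy estimates on $D_R^N$ combined with assumption (d); Lemma \ref{lem:counter_down} to lower the index from $(n+M)\bd\beta$ to $(n-l+j)\bd\beta + \bd\alpha$; and iterated Lemma \ref{lem:nagumo_der} to replace $\partial_{\bd m,\bd z}^{\bd\alpha} u_{n-l+j}$ by roughly $C_{\bd\alpha}(n-l+j)^{\bd s\cdot\bd\alpha}\, v_{n-l+j}$, where $v_p := \|u_p\|_{p\bd\beta,r,\bd s}$. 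The sequence-ratios $m_0(n)/m_0(n+M)$ and $m_0(n-l+j)/m_0(n-l)$ are controlled by the regularity of $m_0$, contributing powers of the form $n^{s_0(j-M)}$ up to constants.

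The induction assuming $v_p \leq KL^p p!^{1/k_1}$ for all $p < n+M$ then closes provided that, for every $(j,\bd\alpha) \in \Lambda$, the combined factors satisfy an inequality equivalent to
\begin{equation*}
s_0(j-M) + \bd s \cdot \bd\alpha \;\leq\; \tfrac{1}{k_1}\bigl(\ord(a_{j,\bd\alpha}) - j + M\bigr),
\end{equation*}
which is exactly the definition of $1/k_1$ as the reciprocal of the slope of the first non-horizontal segment of $N(P,s_0,\bd s)$. This bookkeeping step --- tracking the factorials from the induction hypothesis, the gamma-type factors from the derivative estimates, the Gevrey factors coming from $a_{j,\bd\alpha}$ and $f$, and the $m_0$-regularity ratios, so that the Newton-polygon inequality arises precisely as the closing condition --- is the main obstacle. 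Beyond that, $K$ must be chosen large enough to dominate the base cases $v_0, \dots, v_{M-1}$ (themselves estimated by applying Lemma \ref{lem:gevrey_norm} to the initial data $\phi_j$), and $L$ large enough that the finite sum over $(j,\bd\alpha) \in \Lambda$ and $l$, together with the residual powers $r^{(l-j+M)|\bd\beta|-|\bd\alpha|}$, are absorbed into a factor strictly less than $1$. Any strictly smaller candidate for the Gevrey index would fail at exactly the pair $(j^*,\bd\alpha^*)$ realizing the Newton slope $k_1$.
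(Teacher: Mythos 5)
Your proposal is correct and follows essentially the same route as the paper: the theorem is reduced, via Lemma \ref{lem:sup_norm}, to the Nagumo-norm estimate of Proposition \ref{prop:norm_est}, which is proved by induction on the coefficient recurrence using Lemmas \ref{lem:gevrey_norm}, \ref{lem:nagumo_prop}, \ref{lem:counter_down} and \ref{lem:nagumo_der}, with the Newton-polygon inequality $s_{0}(j-M)+\bd s\cdot\bd\alpha\leq\frac{1}{k_{1}}\left(\ord(a_{j,\bd\alpha})-j+M\right)$ as the closing condition. Your only deviations are cosmetic --- you weight the $u$-factor and take the $\bd 0$-norm of $a_{j,\bd\alpha,l}$ instead of splitting the weight as in Lemma \ref{lem:nagumo_prop}(\ref{enu:nagumo_multi}), and you take any $\bd\beta$ coordinate-wise larger than all $\bd\alpha$ in place of the paper's $\bd\alpha_{0}$ --- both of which work, the resulting polynomial-in-$l$ loss being absorbed by the geometric factor.
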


Before moving on to the proof of this theorem, we shall show that
the following proposition holds true:

\begin{prop}\label{prop:norm_est}

Suppose that $\bd\alpha_{0}=(\alpha_{0,1},\alpha_{0,2},\dots,\alpha_{0,N})$
is a multi-index such that 
\begin{equation*}
\alpha_{0,k}:=\left\lfloor \max_{(j,\bd\alpha)\in\Lambda}\frac{\alpha_{k}}{\emph{ord}_t\left(a_{j,\bd\alpha}\right)-j+M}\right\rfloor +1\textrm{ for }k=1,2,\dots,N
\end{equation*}
and that $\hat{u}(t,\bd z)=\sum_{n=0}^{\infty}u_{n}(\bd z)t^{n}$
is a formal solution of \eqref{eq:cauchy}. Then for any $0<r<R$
there exist constants $A,B>0$ such that 
\begin{equation}
\|u_{n}(z)\|_{n\bd\alpha_{0},r,\bd s}\leq AB^{n}n!^{1/k_{1}}\textrm{ for any }n\in\NN_{0}.\label{eq:norm_est}
\end{equation}
\end{prop}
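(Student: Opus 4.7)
The plan is to convert \eqref{eq:cauchy} into a coefficientwise recurrence for $u_n(\bd z)$ and then establish \eqref{eq:norm_est} by induction on $n$, estimating each term of the recurrence by means of the three main properties of the modified Nagumo norm established in Section \ref{Sec3}.

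First I would substitute $\hat u(t,\bd z)=\sum_n u_n(\bd z)t^n$ together with the expansions $f(t,\bd z)=\sum_n f_n(\bd z)t^n$ and $a_{j,\bd\alpha}(t,\bd z)=\sum_p a_{j,\bd\alpha,p}(\bd z)t^p$ into \eqref{eq:cauchy} and equate coefficients of $t^n$. Using the definition of $\partial_{m_0,t}$ this produces the recurrence
\begin{equation*}
\frac{m_0(n+M)}{m_0(n)}u_{n+M}(\bd z)=f_n(\bd z)-\sum_{(j,\bd\alpha)\in\Lambda}\sum_{\substack{p+q=n\\ p\geq\ord(a_{j,\bd\alpha})}}\frac{m_0(q+j)}{m_0(q)}\,a_{j,\bd\alpha,p}(\bd z)\,\partial_{\bd m,\bd z}^{\bd\alpha}u_{q+j}(\bd z),
\end{equation*}
and assumption (\ref{enu:ord_t_a}) forces $q+j\leq n+M-1$ in every nonzero term, so the right-hand side depends only on lower-indexed coefficients $u_l$.

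Next I would take the norm $\|\cdot\|_{(n+M)\bd\alpha_0,r,\bd s}$ of both sides. Lemma \ref{lem:nagumo_prop}(i) splits each product $a_{j,\bd\alpha,p}\partial_{\bd m,\bd z}^{\bd\alpha}u_l$ (with $l=q+j$) into two factors with indices $\bd\gamma:=(p+M-j)\bd\alpha_0-\bd\alpha$ and $l\bd\alpha_0+\bd\alpha$, and the choice of $\bd\alpha_0$ is exactly what is needed to make this splitting legal: the ``$+1$'' in its definition produces the strict inequality
\begin{equation*}
(p+M-j)\alpha_{0,k}\geq(\ord(a_{j,\bd\alpha})+M-j)\alpha_{0,k}>\alpha_k,
\end{equation*}
so that $\gamma_k\geq 1$ for every $k$ and $\bd\gamma\in\NN^N$. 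Iterating Lemma \ref{lem:nagumo_der} $|\bd\alpha|$ times controls the derivative factor by $C^{|\bd\alpha|}\prod_{k=1}^{N}\frac{(l\alpha_{0,k}+\alpha_k-1)!^{s_k}}{(l\alpha_{0,k}-1)!^{s_k}}\|u_l\|_{l\bd\alpha_0,r,\bd s}$, while the Cauchy-inequality argument underlying Lemma \ref{lem:gevrey_norm}, combined with assumption (\ref{enu:gevrey_order-Ndim}), yields estimates of the form $\|a_{j,\bd\alpha,p}\|_{\bd\gamma,r,\bd s}\leq c_1 c_2^{p} p!^{1/k_1}$ and $\|f_n\|_{(n+M)\bd\alpha_0,r,\bd s}\leq c_3 c_4^{n}n!^{1/k_1}$. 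Regularity of $m_0$ finally supplies $m_0(n+M)/m_0(n)\geq a^M(n+1)^{Ms_0}$ and $m_0(q+j)/m_0(q)\leq A_0^{j}(n+1)^{js_0}$.

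Setting $V_n:=\|u_n\|_{n\bd\alpha_0,r,\bd s}$, the base cases $n=0,\dots,M-1$ are controlled by Lemma \ref{lem:gevrey_norm} applied to the holomorphic initial data $\phi_n=m_0(n)u_n$ (enlarging $A$ if necessary). Under the induction hypothesis $V_l\leq AB^{l}l!^{1/k_1}$ for $l<n+M$, the contribution of a fixed triple $(j,\bd\alpha,p)$ to $V_{n+M}$ is bounded, up to data-dependent absolute constants, by
\begin{equation*}
A\,B^{l}\,\tilde B_a^{p}\,(n+1)^{(j-M)s_0+\bd s\cdot\bd\alpha+(j-M)/k_1}\,(p!\,l!)^{1/k_1}.
\end{equation*}
The main obstacle of the proof is to bound the sum of all such contributions by a fixed fraction of $AB^{n+M}(n+M)!^{1/k_1}$, so that the induction closes. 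The decisive input is the very definition of $k_1$: for every $(j,\bd\alpha)\in\Lambda$ one has $s_0(j-M)+\bd s\cdot\bd\alpha\leq(\ord(a_{j,\bd\alpha})-j+M)/k_1$, and hence
\begin{equation*}
(j-M)s_0+\bd s\cdot\bd\alpha+\frac{j-M}{k_1}\leq\frac{\ord(a_{j,\bd\alpha})}{k_1}\leq\frac{p}{k_1},
\end{equation*}
so the polynomial-in-$n$ factor is controlled by a $p/k_1$-power that is absorbed by the decay supplied by the identity $(p!\,l!)^{1/k_1}=(n+j)!^{1/k_1}\binom{n+j}{p}^{-1/k_1}$. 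After this absorption and the elementary comparison of $(n+j)!$ with $(n+M)!$, the inner sum in $p$ collapses to a convergent geometric series as soon as $B>\tilde B_a$, the outer sum over the finite set $\Lambda$ contributes only a constant, and choosing $B$ (and then $A$) large enough relative to the data closes the induction and establishes \eqref{eq:norm_est}.
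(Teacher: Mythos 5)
Your overall strategy is essentially the paper's: derive the coefficientwise recurrence from \eqref{eq:cauchy}, argue by induction on $n$, split each product $a_{j,\bd\alpha,p}\,\partial_{\bd m,\bd z}^{\bd\alpha}u_l$ with Lemma \ref{lem:nagumo_prop}, use the ``$+1$'' in the definition of $\bd\alpha_0$ to make the index splitting legal, iterate Lemma \ref{lem:nagumo_der} for the derivative factor, estimate the Taylor coefficients of $f$ and $a_{j,\bd\alpha}$ via the Cauchy-type argument of Lemma \ref{lem:gevrey_norm}, and close the induction using the inequality $s_0(j-M)+\bd s\cdot\bd\alpha\le(\ord(a_{j,\bd\alpha})-j+M)/k_1$ coming from the Newton polygon. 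The recurrence, the verification that $\bd\gamma=(p+M-j)\bd\alpha_0-\bd\alpha\in\NN^N$, and the iterated derivative bound are all correct.

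However, the decisive absorption step, as you state it, does not work. After you coarsen every polynomial factor to a power of $n+1$, the exponent $(j-M)s_0+\bd s\cdot\bd\alpha+(j-M)/k_1$ is a \emph{fixed} number for each $(j,\bd\alpha)$, bounded by $\ord(a_{j,\bd\alpha})/k_1$; weakening it further to $p/k_1$ and then claiming that $(n+1)^{p/k_1}$ is absorbed by $\binom{n+j}{p}^{-1/k_1}$ is false: for $p$ of order $n$ (say $p\approx n/2$) one has $(n+1)^{p/k_1}\approx n^{n/(2k_1)}$ while $\binom{n+j}{p}^{1/k_1}\le 2^{(n+j)/k_1}$, and the remaining geometric factor $(c_2/B)^p$ cannot compensate a super-exponential quantity, so the inner sum cannot be bounded by a fraction of $AB^{n+M}(n+M)!^{1/k_1}$ this way. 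The repair is either to keep the exponent at its fixed value $\ord(a_{j,\bd\alpha})/k_1$ and prove the elementary bound $\binom{n+j}{p}\,C_1^{\,p}\ge c\,(n+1)^{\ord(a_{j,\bd\alpha})}$ for all $\ord(a_{j,\bd\alpha})\le p\le n$ and any fixed $C_1>1$ (then $B$ large absorbs $c_2C_1^{1/k_1}$), or, as the paper does, never to pass to powers of $n+1$ at all: keep the polynomial factors as powers of $l+1=n+j-p+1$ and compare them pointwise with the decay $(n-p+1)^{-q_{j,\bd\alpha}/k_1}$ furnished by $(p-q_{j,\bd\alpha})!\,(n-p)!\le n!/(n-p+1)^{q_{j,\bd\alpha}}$ in the paper's shifted indexation, where $s_0(j-M)+\bd s\cdot\bd\alpha\le q_{j,\bd\alpha}/k_1$ closes the estimate term by term. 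Two smaller omissions: (i) you never dispose of $u_0$; when $l=q+j=0$ and some $\alpha_k=0$ the index $l\bd\alpha_0+\bd\alpha$ has zero components, where the modified norm is undefined and Lemma \ref{lem:nagumo_der} cannot be applied, so the paper's reduction $u_0\equiv0$ (replacing $u$ by $u-\phi_0$) is needed; (ii) in your indexation $B^{l}c_2^{p}=B^{n+j}(c_2/B)^{p}$ carries an extra factor $B^{j-M}$ when $j>M$, which grows with $B$ and is neutralized only because the $p$-sum starts at $p\ge\ord(a_{j,\bd\alpha})\ge j-M+1$ by assumption (\ref{enu:ord_t_a}); this should be made explicit when you ``choose $B$ large enough''.
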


\begin{proof}

Without any loss of generality we may assume that $u_{0}(\bd z)\equiv0$.
Otherwise it is enough to use substitution $v(t,\bd z):=u(t,\bd z)-\phi_{0}(\bd z)$.

Now we can proceed to show \eqref{eq:norm_est} by induction on $n$.
It is easy to see that \eqref{eq:norm_est} holds for $n\leq M-1$.
This fact follows directly from the properties of functions $\phi_{1}(\bd z),\dots,\phi_{M-1}(\bd z)$
and Lemma \ref{lem:gevrey_norm}. Let us then show that if $n\geq M$
and the proposition (with $n$ replaced by $i$) holds for any $i<n$
then it is also true for $n$.

Let $t^{M}f(t,\bd z)=\sum_{n=M}^{\infty}f_{n}(\bd z)t^{n}$ and $t^{M-j}a_{j,\bd\alpha}(t,\bd z)=\sum_{n=q_{j,\bd\alpha}}^{\infty}a_{j,\bd\alpha,n}(\bd z)t^{n}$
with $q_{j,\bd\alpha}=\ord\left(a_{j,\bd\alpha}(t,\bd z)\right)-j+M$
for every $(j,\bd\alpha)\in\Lambda.$ Using these facts we find a
recurrence relation describing $u_{n}(\bd z)$ for $n\geq M$: 
\begin{equation}
u_{n}(\bd z)=\frac{m_{0}(n-M)}{m_{0}(n)}\left[f_{n}(\bd z)-\sum_{(j,\bd\alpha)\in\Lambda}\sum_{p=q_{j,\bd\alpha}}^{n}a_{j,\bd\alpha,p}(\bd z)\frac{m_{0}(n-p)}{m_{0}(n-p-j)}\partial_{\bd m,\bd z}^{\bd\alpha}u_{n-p}(\bd z)\right].\label{eq:recurrence}
\end{equation}
From this point on we shall use a convention that a term $\frac{m_{0}(n-p)}{m_{0}(n-p-j)}$
disappears whenever $n-p-j<0$.

Let us fix $0<r<R$. By assumption \eqref{enu:gevrey_order-Ndim}
and Lemma \ref{lem:gevrey_norm} there exist positive constants $F,\,G$,
for which $\|f_{n}(\bd z)\|_{n\bd\alpha_{0},r,\bd s}\leq FG^{n}n!^{1/k_{1}}$.
Moreover, from regularity of $m_{0}$ and $m_{1},\dots,m_{N}$, there
exist constants $c,C>0$ such that 
\begin{equation*}
c(n+1)^{s_{j}}\leq\frac{m_{j}(n+1)}{m_{j}(n)}\leq C(n+1)^{s_{j}}\textrm{ for any }n\in\NN_{0}\textrm{ and }j=0,1,\dots,N.
\end{equation*}
Hence, we have 
\begin{multline*}
\|u_{n}(\bd z)\|_{n\bd\alpha_{0},r,\bd s}\leq\frac{c^{-M}(n-M)!^{s_{0}}}{n!^{s_{0}}}\left[FG^{n}n!^{1/k_{1}}+\sum_{(j,\bd\alpha)\in\Lambda}\sum_{p=q_{j,\bd\alpha}}^{n}C^{j}\frac{(n-p)!^{s_{0}}}{(n-p-j)!^{s_{0}}}\right.\\
\left.\times\|a_{j,\bd\alpha,p}(\bd z)\partial_{\bd m,\bd z}^{\bd\alpha}u_{n-p}(\bd z)\|_{n\bd\alpha_{0},r,\bd s}\right].
\end{multline*}
It is easy to see that the first term on the right-hand side of the
inequality above can be bounded by $\frac{1}{2}AB^{n}n!^{1/k_{1}}$
if we put $A\geq2c^{-M}F$ and $B\geq G$.

Now we can analyze the term: 
\begin{equation*}
I=\frac{c^{-M}(n-M)!^{s_{0}}}{n!^{s_{0}}}\sum_{(j,\bd\alpha)\in\Lambda}\sum_{p=q_{j,\bd\alpha}}^{n}C^{j}\frac{(n-p)!^{s_{0}}}{(n-p-j)!^{s_{0}}}\|a_{j,\bd\alpha,p}(\bd z)\partial_{\bd m,\bd z}^{\bd\alpha}u_{n-p}(\bd z)\|_{n\bd\alpha_{0},r,\bd s},
\end{equation*}
By Lemma \ref{lem:nagumo_prop} we have 
\begin{equation*}
\|a_{j,\bd\alpha,p}(\bd z)\partial_{\bd m,\bd z}^{\bd\alpha}u_{n-p}(\bd z)\|_{n\bd\alpha_{0},r,\bd s}\leq\|a_{j,\bd\alpha,p}(\bd z)\|_{p\bd\alpha_{0}-\bd\alpha,r,\bd s}\|\partial_{\bd m,\bd z}^{\bd\alpha}u_{n-p}(\bd z)\|_{(n-p)\bd\alpha_{0}+\bd\alpha,r,\bd s}.
\end{equation*}
By Lemma \ref{lem:counter_down} and
the definition of $\bd\alpha_{0}$ we also get 
\begin{equation*}
\|a_{j,\bd\alpha,p}(\bd z)\|_{n\bd\alpha_{0}-\bd\alpha,r,\bd s}\leq R^{q_{j,\bd\alpha}|\bd\alpha_{0}|-|\bd\alpha|}\|a_{j,\bd\alpha,p}(\bd z)\|_{(p-q_{j,\bd\alpha})\bd\alpha_{0},r,\bd s}.
\end{equation*}
Note that $q_{j,\bd\alpha}\geq1$ for every $(j,\bd\alpha)\in\Lambda$
and so we can write that $t^{M-j}a_{j,\bd\alpha}(t,\bd z)=t^{q_{j,\bd\alpha}}\sum_{n=q_{j,\bd\alpha}}^{\infty}a_{j,\bd\alpha,n}(\bd z)t^{n-q_{j,\bd\alpha}}$
and treat every $a_{j,\bd\alpha,n}(\bd z)$ like the $\left(n-q_{j,\bd\alpha}\right)$-th
coefficient of the formal power series. Hence, using Lemma \ref{lem:gevrey_norm} we receive
\begin{equation*}
\|a_{j,\bd\alpha,p}(\bd z)\|_{(p-q_{j,\bd\alpha})\bd\alpha_{0},r,\bd s}\leq \tilde{K}L^{p}\left(p-q_{j,\bd\alpha}\right)!^{1/k_{1}}
\end{equation*}
for a certain pair of positive constants $\tilde{K},\,L$. If we put $K=R^{q_{j,\bd\alpha}|\bd\alpha_{0}|}\tilde{K}$ then
\begin{equation*}
\|a_{j,\bd\alpha,p}(\bd z)\|_{n\bd\alpha_{0}-\bd\alpha,r,\bd s}\leq KL^{p}\left(p-q_{j,\bd\alpha}\right)!^{1/k_{1}}.
\end{equation*}

Moreover, by Lemma \ref{lem:nagumo_der} we have 
\begin{multline*}
\|\partial_{\bd m,\bd z}^{\bd\alpha}u_{n-p}(\bd z)\|_{(n-p)\bd\alpha_{0}+\bd\alpha,r,\bd s}\leq C^{|\bd\alpha|}\left|\bd\alpha_{0}\right|^{\bd s\cdot\bd\alpha}\|u_{n-p}(\bd z)\|_{(n-p)\bd\alpha_{0},r,\bd s}\\
\times\prod_{k=1}^{N}\left(n-p+\frac{\alpha_{k}-1}{\alpha_{0,k}}\right)^{s_{k}}\left(n-p+\frac{\alpha_{k}-2}{\alpha_{0,k}}\right)^{s_{k}}\dots\left(n-p\right)^{s_{k}}
\end{multline*}
and, by the inductive assumption,
\begin{equation*}
\|u_{n-p}(\bd z)\|_{(n-p)\bd\alpha_{0},r,\bd s}\leq AB^{n-p}(n-p)!^{1/k_{1}}.
\end{equation*}

Furthermore, let us observe that 
\begin{equation*}
\left(p-q_{j,\bd\alpha}\right)!^{1/k_{1}}(n-p)!^{1/k_{1}}\leq\frac{n!^{1/k_{1}}}{(n-p+1)^{1/k_{1}}\dots(n-p+q_{j,\bd\alpha})^{1/k_{1}}}\leq\frac{n!^{1/k_{1}}}{(n-p+1)^{q_{j,\bd\alpha}/k_{1}}}
\end{equation*}
and 
\begin{equation*}
n-p+\frac{l}{\alpha_{0,k}}\leq n-p+\frac{q_{j,\bd\alpha}l}{\alpha_{k}}\textrm{ for any }0\leq l\leq\alpha_{k}-1,\,1\leq k\leq N.
\end{equation*}
Hence, 
\begin{multline*}
I\leq{}AB^{n}n!^{1/k_{1}}\sum_{(j,\alpha)\in\lambda}c^{-M}KC^{j+|\alpha|}\left|\bd\alpha_{0}\right|^{\bd s\cdot\bd\alpha}\frac{(n-M)!^{s_{0}}(n-p)!^{s_{0}}\prod_{k=1}^{N}\prod_{l=0}^{\alpha_{k}-1}\left(n-p+\left\lceil \frac{q_{j,\bd\alpha}l}{\alpha_{k}}\right\rceil \right)^{s_{k}}}{n!^{s_{0}}(n-p-j)!^{s_{0}}(n-p+1)^{q_{j,\bd\alpha}/k_{1}}}\\
\times\sum_{p=q_{j,\bd\alpha}}^{n}\left(\frac{L}{B}\right)^{p}
\end{multline*}
\begin{align*}
&\leq{}AB^{n}n!^{1/k_{1}}\sum_{(j,\alpha)\in\lambda}c^{-M}KC^{j+|\alpha|}\left|\bd\alpha_{0}\right|^{\bd s\cdot\bd\alpha}\frac{(n-p)^{s_{0}j}\left(n-p+q_{j,\bd\alpha}\right)^{\bd s\cdot\bd\alpha}}{(n-M+1)^{s_{0}M}(n-p+1)^{s_{0}(j-M)+\bd s\cdot\bd\alpha}}\sum_{p=q_{j,\bd\alpha}}^{n}\left(\frac{L}{B}\right)^{p}\\
&\leq{} AB^{n}n!^{1/k_{1}}\sum_{(j,\alpha)\in\lambda}c^{-M}KC^{j+|\alpha|}\left|\bd\alpha_{0}\right|^{\bd s\cdot\bd\alpha}\left(\frac{n-p+q_{j,\bd\alpha}}{n-p+1}\right)^{\bd s\cdot\bd\alpha}\left(\frac{n-p+1}{n-M+1}\right)^{s_{0}M}\sum_{p=q_{j,\bd\alpha}}^{n}\left(\frac{L}{B}\right)^{p}.
\end{align*}

Moreover, 
\begin{equation*}
\frac{n-p+q_{j,\bd\alpha}}{n-p+1}\leq1+\frac{q_{j,\bd\alpha}-1}{n-p+1}\leq q_{j,\bd\alpha}\textrm{ for any }1\leq p\leq n
\end{equation*}
and 
\begin{equation*}
\frac{n-p+1}{n-M+1}\leq\frac{1}{1-\frac{M-1}{n}}\leq M\textrm{ for any }n\geq M,
\end{equation*}

By combining facts listed above we receive 
\begin{align*}
I & \leq AB^{n}n!^{1/k_{1}}\sum_{(j,\alpha)\in\Lambda}c^{-M}KC^{j+|\alpha|}\left|\bd\alpha_{0}\right|^{\bd s\cdot\bd\alpha}M^{Ms_{0}}q_{j,\bd\alpha}^{\bd s\cdot\bd\alpha}\sum_{p=q_{j,\bd\alpha}}^{n}\left(\frac{L}{B}\right)^{p}\\
 & \leq AB^{n}n!^{1/k_{1}}\sum_{(j,\alpha)\in\Lambda}c^{-M}KC^{j+|\alpha|}\left|\bd\alpha_{0}\right|^{\bd s\cdot\bd\alpha}M^{Ms_{0}}q_{j,\bd\alpha}^{\bd s\cdot\bd\alpha}\sum_{p=q_{j,\bd\alpha}}^{\infty}\left(\frac{L}{B}\right)^{p}\\
 & =AB^{n}n!^{1/k_{1}}\sum_{(j,\alpha)\in\Lambda}c^{-M}KC^{j+|\alpha|}\left|\bd\alpha_{0}\right|^{\bd s\cdot\bd\alpha}M^{Ms_{0}}q_{j,\bd\alpha}^{\bd s\cdot\bd\alpha}\frac{L}{B-L}.
\end{align*}
It remains to notice that the last sum can be bounded from above by
$\frac{1}{2}$ for sufficiently large $B>L$.\end{proof}

Now we can go back to the proof of Theorem \ref{thm:gevrey}.

\begin{proof}[Proof of Theorem \ref{thm:gevrey}]

Using Proposition \ref{prop:norm_est} and Lemma \ref{lem:sup_norm}
we conclude that for any $n\in\NN_{0}$ and $\rho<r$ there exist
constants $\tilde{A}$, $A$, and $B$ such that 
\begin{equation*}
\sup_{\bd z\in D_{\rho}^{N}}|u_{n}(\bd z)|\leq\tilde{A}^{n|\bd\alpha_{0}|}\|u_{n}(z)\|_{n\bd\alpha_{0},r,\bd s}\leq A\left(\tilde{A}^{|\bd\alpha_{0}|}B\right)^{n}n!^{1/k_{1}}.
\end{equation*}
Hence, $\hat{u}(t,\bd z)\in\Oo\left(D_{R}^{N}\right)[[t]]_{1/k_{1}}$.\end{proof}

\section{Suggestions for further research\label{Sec5}}
It might be possible to extend the result from Theorem \ref{thm:gevrey} to a certain class of non-linear Gevrey-type differential equations. We would like to achieve this by using methods inspired by the approach presented in this paper.

Moreover, it is worth mentioning that an idea similar to the Gevrey-type sequences has already been considered in \cite{LMiSu}. Sequences and operators defined there are similar to the ones  presented in Definitions \ref{def:gevrey_seq} and \ref{def:gevrey_op}, with the Gevrey sequence $n!^{s}$ replaced with any given sequence $M_{n}$ of positive real numbers. Under certain additional assumptions concerning the sequence
$M_{n}$ the authors were able to analyze the Gevrey properties of a certain class of linear equations with time-dependent coefficients using the approach similar to \cite{TY} and \cite{MicS}. A question remains open, whether those findings can be extended to the case when the coefficients depend on both $t\in\CC$ and $\bd z\in\CC^N$.

\end{document}